\providecommand{\U}[1]{\protect\rule{.1in}{.1in}}
\newtheorem{theorem}{Theorem}[section]
\newtheorem{corollary}[theorem]{Corollary}
\newtheorem{lemma}[theorem]{Lemma}
\newtheorem{proposition}[theorem]{Proposition}
\theoremstyle{definition}
\newtheorem{definition}[theorem]{Definition}
\newtheorem{remark}[theorem]{Remark}
\begin{document}

\title{Tensor splitting properties of $n$-inverse pairs of operators}
\author{Stepan Paul and Caixing Gu}
\maketitle

\begin{abstract}
In this paper we study $n$-inverse pairs of operators on the tensor product of Banach spaces. In particular we show that an $n$-inverse pair of elementary tensors of operators on the tensor product of two Banach spaces can arise \emph{only} from $l$- and $m$-inverse pairs of operators on the individual spaces. This gives a converse to a result of Duggal and M\"{u}ller \cite{DM}, and proves a conjecture of the second named author \cite{GuSM}. Our proof uses techniques from algebraic geometry, which generalize to other relations among operators in a tensor product. We apply this theory to obtain results for $n$-symmetries in a tensor product as well.
%
%
\end{abstract}

\section{Introduction}
Let $B(X)$ be the algebra of all bounded linear operators on a Banach space $X.$ For $S,T\in B(X),$ we define the functional calculus
\begin{equation}\label{betadef}
\beta_{n}(S,T)=\sum\limits_{k=0}^{n}(-1)^{n-k}\binom
{n}{k}S^{k}T^{k}.
\end{equation}
As in Sid Ahmed \cite{Ahmed} and
Duggal and M\"{u}ller \cite{DM}, we say $S$ is a \emph{left $n$-inverse} of $T$ (or $T$ is a
\emph{right $n$-inverse} of $S$, or $(S,T)$ is an \emph{$n$-inverse pair}) if
$\beta_{n}(S,T)
=0$. If $\beta_n(S,T)=0$, but $\beta_{n-1}(S,T)\neq0$, we say $S$ is a \emph{strict} left $n$-inverse of $T$. In fact, these definitions make sense for elements $S$ and $T$ in an arbitrary $\mathbb{C}$-algebra with identity.

This definition is of course a generalization of the definition of an ordinary left inverse---that $S$ is a left inverse of $T$ if and only if
\[
ST-1=0.
\]

Loosely speaking, the expression (\ref{betadef}) is obtained by substituting $S$ for $x$ and $T$ for $y$ in the expansion
\begin{equation*}\label{expansion}
(xy-1)^{n}=\sum_{k=0}^{n}(-1)^{n-k}{\binom{n}{k}}x^{k}y^{k},
\end{equation*}
always keeping powers of $S$ to the left of powers of $T$.




The concept of $n$-inverse pairs of operators 
is motivated by the $n$-isometries studied early in \cite{A2,AHS,AS,R} on
Hilbert spaces and more recently in \cite{BJ,BJ2,COT,Duggal,Gu,GS,ST1} on Hilbert spaces and in \cite{Bay,BMN2,Gu2,HMS} on Banach spaces. An operator
$T$ on a Hilbert space $H$ is called an $n$-isometry if $\beta_{n}(T^{\ast},T)=0$, that is, if $T^{\ast}$ is a left $n$-inverse of $T.$

If $X$ and $Y$ are Banach spaces, we let $X\overline{\otimes}Y$ denote the
completion, endowed with a reasonable uniform cross norm, of the algebraic
tensor product $X\otimes Y$ of $X$ and $Y.$ 
The initial objective of this paper is to prove the following theorem.

\begin{theorem}\label{ninversethm}
Suppose $S_1,T_1\in B(X)$ and $S_2,T_2\in B(Y)$. Then the following are equivalent:
\begin{enumerate}[label=(\alph*)]
\item\label{tensor} $S_1\otimes S_2$ is a (strict) left $n$-inverse of $T_1\otimes T_2$ in $B(X\overline\otimes Y)$.
\item\label{individuals} There exist positive integers $l,m$ with $l+m=n+1$ and $\lambda\in\mathbb{C}^\ast$ so that $S_1$ is a (strict) left $l$-inverse of $\lambda T_1$ in $B(X)$ and $S_2$ is a (strict) left $m$-inverse of $(1/\lambda)T_2$ in $B(Y)$.
\end{enumerate}

\end{theorem}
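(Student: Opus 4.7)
My plan is to recast both conditions as ideal-membership statements in a polynomial ring and then exploit the product structure of the corresponding affine variety.

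\textbf{Algebraic setup and easy direction.} For any pair $(S,T)$ on a Banach space $Z$, I would define the linear map $\phi_{S,T}:\mathbb{C}[t]\to B(Z)$ by $t^k\mapsto S^k T^k$. Since $S\cdot S^k T^k\cdot T=S^{k+1}T^{k+1}$, its kernel $I_{S,T}$ is closed under multiplication by $t$ and is therefore an ideal. Moreover $\phi_{S,T}((\mu t-1)^l)=\beta_l(S,\mu T)$, so ``$S$ is a (strict) left $l$-inverse of $\mu T$'' translates to $(\mu t-1)^l\in I_{S,T}$ (with $l$ minimal in the strict case). Using $(S_1\otimes S_2)^k(T_1\otimes T_2)^k=S_1^kT_1^k\otimes S_2^kT_2^k$, one obtains $\beta_n(S_1\otimes S_2,T_1\otimes T_2)=(\phi_1\otimes\phi_2)((t_1t_2-1)^n)$, and since tensor products of injective $\mathbb{C}$-linear maps are injective, the kernel of $\phi_1\otimes\phi_2:\mathbb{C}[t_1,t_2]\to B(X)\otimes B(Y)\hookrightarrow B(X\overline\otimes Y)$ equals the ideal $J:=(I_1,I_2)$. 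Thus condition (a) is equivalent to $(t_1t_2-1)^n\in J$. For (b)$\Rightarrow$(a), the identity $t_1t_2-1=(t_2/\lambda)(\lambda t_1-1)+((1/\lambda)t_2-1)$ together with the binomial theorem places every term of $(t_1t_2-1)^n$ into $J$, since $k+(n-k)=n=l+m-1$ forces $k\geq l$ or $n-k\geq m$.

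\textbf{Structure via algebraic geometry.} For the converse, assume $(t_1t_2-1)^n\in J$. Nontriviality of $X,Y$ forces $I_1,I_2\neq 0$ (else specializing at $t_1=0$ would yield $(-1)^n\in I_2$), so write $I_j=(p_j(t_j))$. Then $V(J)=V(p_1)\times V(p_2)\subset\mathbb{C}^2$, and $(t_1t_2-1)^n\in J$ implies that $t_1t_2-1$ itself vanishes on $V(J)$; equivalently $\alpha\beta=1$ for every root $\alpha$ of $p_1$ and every root $\beta$ of $p_2$. A single root of $p_2$ then pins down every root of $p_1$, so each $p_j$ has a unique root, whence $p_1(t_1)=(t_1-1/\lambda)^L$ and $p_2(t_2)=(t_2-\lambda)^M$ for some $\lambda\in\mathbb{C}^{*}$ and $L,M\geq 1$.

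\textbf{Nilpotence index (the crux).} The main obstacle will be to determine exactly when $(t_1t_2-1)^n\in J$ given this structure. Changing variables $x=\lambda t_1-1$, $y=(1/\lambda)t_2-1$ reduces the question to whether $(x+y+xy)^n\in(x^L,y^M)$ in $\mathbb{C}[x,y]$. Working in $R:=\mathbb{C}[x,y]/(x^L,y^M)$ and setting $\alpha:=(1+x)(1+y)-1$, the expansion $\alpha^n=\sum_i(-1)^{n-i}\binom{n}{i}(1+x)^i(1+y)^i$ reduces modulo $J$ to give the coefficient of $x^k y^j$ (for $k<L$, $j<M$) as
\[
c_{k,j}(n)=\binom{n}{k}(-1)^{n-k}\sum_{s\geq 0}(-1)^s\binom{n-k}{s}\binom{k+s}{j}.
\]
Recognizing the inner sum as the $(n-k)$-th finite difference at $0$ of the degree-$j$ polynomial $s\mapsto\binom{k+s}{j}$, it vanishes iff $n-k>j$ and equals $\pm 1$ when $n-k=j$. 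Thus $\alpha^n=0$ in $R$ iff $n\geq L+M-1$, with sharpness witnessed by $c_{L-1,M-1}(L+M-2)=\pm\binom{L+M-2}{L-1}\neq 0$. Consequently the strict case (a) forces $n=L+M-1$, yielding (b) with $l=L$, $m=M$; in the non-strict case any $n\geq L+M-1$ is allowed, and $l=L$, $m=n+1-L\geq M$ works. This combinatorial identification of the exact nilpotence index is where the real work lies; the rest is setup and an appeal to the geometry of product varieties.
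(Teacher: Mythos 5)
Your proof is correct, and it takes a genuinely different and more elementary route than the paper. The paper builds a two-variable non-commutative formalism $\Phi:\mathbb{C}[x,y]\to\mathbb{C}\langle X,Y\rangle$, passes to $\mathbb{C}[x_1,y_1,x_2,y_2]$ via a diagonal map $\delta$, and invokes Hilbert's Nullstellensatz, ideal heights, quasi-homogeneous polynomials, and a general decomposition lemma (Theorem~\ref{iff}) whose technical Condition~3 is verified using Remark~\ref{generate} and Lemma~\ref{minimality}. You instead observe that $\beta_n$ is a linear combination of monomials $S^kT^k$ only, so the whole relation lives in a \emph{single-variable} polynomial ring $\mathbb{C}[t]$, where every ideal is principal. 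This lets you read off the structure of $I_1,I_2$ directly ($p_j$ must be a power of a single linear factor because $t_1t_2-1$ must vanish on $V(p_1)\times V(p_2)$), change variables to reduce to the concrete question of when $(x+y+xy)^n\in(x^L,y^M)$, and settle that by computing coefficients and recognizing a finite-difference operator applied to a degree-$j$ polynomial. The trade-off is that your argument is tailored to polynomials that are functions of $xy$; the paper's heavier machinery is designed to also handle $p(x,y)=x-y$ and other quasi-homogeneous $p$, which underlie its $n$-symmetry results (Theorems~\ref{nsym} and~\ref{nsym2}). For Theorem~\ref{ninversethm} alone, your approach is cleaner and more self-contained.

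One small imprecision worth fixing: your claim that the inner sum ``vanishes iff $n-k>j$'' is not quite right as an equivalence --- the $(n-k)$-th finite difference of $s\mapsto\binom{k+s}{j}$ can also vanish for $n-k<j$ (e.g.\ $n-k=0$, $k<j$). What your argument actually uses, and what is true, is the implication that it vanishes \emph{when} $n-k>j$, together with the nonvanishing at $n-k=j$ (where the difference equals $(-1)^j$, giving $c_{L-1,M-1}(L+M-2)=\binom{L+M-2}{L-1}\neq0$). Combined with the trivial fact that $\alpha^n=0$ propagates to higher $n$, this pins the nilpotence index at exactly $L+M-1$, which is all the argument requires. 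Everything else --- the reduction to $\mathbb{C}[t]$, the identification $\ker(\phi_1\otimes\phi_2)=(I_1,I_2)$ via Lemma~\ref{kernelsplit}, the splitting of $t_1t_2-1$, and the handling of the strict case via minimality of $L,M$ --- checks out.
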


That \ref{individuals} implies \ref{tensor} was proved by
Duggal and M\"{u}ller in Theorem~2.3 of \cite{DM}. A corollary of their result for $n$-isometric tensor products is proved in
Theorem~2.10 of \cite{Duggal}, which answers questions about $m$-isometric
elementary operators acting on Hilbert-Schmidt operator ideals studied in \cite{BJ} and \cite{BJ2}.

The other implication was conjectured by the second named author in
Conjecture~20 of \cite{GuSM}, and verified for small $n$ and under some technical conditions. 
With some additional work, Theorem 7 of \cite{Gu} for $n$-isometric elementary operators can be viewed as a corollary of this
result.
An elementary operator (acting on Hilbert-Schmidt
operator ideals) of length one is equivalent to the tensor product of two
operators by \cite{Es}. See \cite{Gu} for more general elementary operators
(such as generalized derivations) that are $m$-isometries. 


In this paper, we will prove Theorem~\ref{ninversethm}, and generalize to a more general set of relations among elements of a $\mathbb{C}$-algebra. Specifically, for any polynomial $p(x,y)$, we consider the relation obtained by substituting $S$ for $x$ and $T$ for $y$ into $p(x,y)^n$, always keeping powers of $S$ to the left of powers of $T$. 

Of particular interest are the cases where $p(x,y)=xy-1$ as already discussed, and where $p(x,y)=x-y$. The latter yields
\begin{equation*}\label{gammadef}
\gamma_n(S,T)=\sum_{k=0}^n(-1)^{n-k}{n\choose k}S^kT^{n-k}.
\end{equation*}

This relation is studied in \cite{KK2} and \cite{KK} for bounded operators $S$ and $T$ on a Hilbert space. In this context, we say $T$ is in the $n$th Helton class of $S$ and write $T\in\mathrm{Helton}_n(S)$ if $\gamma_n(S,T)=0$. Furthermore, we say $T$ is an \emph{$n$-symmetry} if $\gamma_n(T^\ast,T)=0$. The $n$-symmetric operators were introduced and studied in connection with
Sturm-Liouville conjugate point theory by Helton \cite{Helton} and studied in \cite{BH}. They
are inspriational in the study of $m$-isometries and more general hereditary
roots in \cite{AHS} and \cite{ST1}. 
Interestingly, we prove in Section~\ref{tensorsplittingsection} that a direct analogue of Theorem~\ref{ninversethm} is possible essentially in exactly the two cases $xy-1$ and $x-y$, and no others.

We consider several applications of this theory in Sections~\ref{opsection} and \ref{anothersection}, and perhaps most interestingly prove the following pair of theorems. If $H$ and $K$ are Hilbert spaces, we denote by $H\overline\otimes K$ the Hilbert space tensor product of $H$ and $K$.

\begin{theorem}\label{nsym}
Suppose $H$ and $K$ are Hilbert spaces, $T_1\in B(H)$ and $T_2\in B(K)$, and both $T_1$ and $T_2$ are left invertible. Then the following are equivalent:
\begin{enumerate}[label=(\alph*)]
\item\label{tensornsym} $T_1\otimes T_2$ is an $n$-symmetry in $B(H\overline\otimes K)$.
\item\label{individualsnsym} There exist positive integers $l,m$ with $l+m=n+1$ and $\lambda\in\mathbb{C}$ with $|\lambda|=1$ so that $\lambda T_1$ is an $l$-symmetry in $B(H)$ and $\bar\lambda T_2$ is an $m$-symmetry in $B(K)$.
\end{enumerate}
\end{theorem}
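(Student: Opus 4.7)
The plan is to combine the splitting theorem for $p(x,y)=x-y$ established in Section~\ref{tensorsplittingsection} with the Hilbert-space adjoint $T\mapsto T^{*}$. For (b)~$\Rightarrow$~(a), the hypothesis $|\lambda|=1$ yields the factorizations $T_{1}\otimes T_{2}=(\lambda T_{1})\otimes(\bar\lambda T_{2})$ and $(T_{1}\otimes T_{2})^{*}=(\lambda T_{1})^{*}\otimes(\bar\lambda T_{2})^{*}$. Feeding the $l$- and $m$-symmetry hypotheses $\gamma_{l}((\lambda T_{1})^{*},\lambda T_{1})=0$ and $\gamma_{m}((\bar\lambda T_{2})^{*},\bar\lambda T_{2})=0$ into the easy direction of the $\gamma_{n}$-splitting theorem immediately yields $\gamma_{n}((T_{1}\otimes T_{2})^{*},T_{1}\otimes T_{2})=0$ with $n=l+m-1$.

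For (a)~$\Rightarrow$~(b), first apply the hard direction of the $\gamma_{n}$-splitting theorem to $\gamma_{n}(T_{1}^{*}\otimes T_{2}^{*},T_{1}\otimes T_{2})=0$, producing positive integers $l,m$ with $l+m=n+1$ and $\nu\in\mathbb{C}^{*}$ satisfying
\begin{equation*}
\gamma_{l}(\nu T_{1}^{*},T_{1})=0\qquad\text{and}\qquad \gamma_{m}(\nu^{-1}T_{2}^{*},T_{2})=0.
\end{equation*}
Taking the adjoint of each identity, reindexing the sum, and using the homogeneity of $\gamma_{l}$ produces a companion splitting with $\bar\nu^{-1}$ in place of $\nu$: $\gamma_{l}(\bar\nu^{-1}T_{1}^{*},T_{1})=0$ and $\gamma_{m}(\bar\nu\,T_{2}^{*},T_{2})=0$. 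Granted that $|\nu|=1$ (the crux, addressed below), choose $\lambda\in\mathbb{C}$ with $|\lambda|=1$ and $\bar\lambda^{2}=\nu$; homogeneity then converts $\gamma_{l}(\nu T_{1}^{*},T_{1})=0$ into $\gamma_{l}((\lambda T_{1})^{*},\lambda T_{1})=0$, so $\lambda T_{1}$ is an $l$-symmetry, and symmetrically $\bar\lambda T_{2}$ is an $m$-symmetry.

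The main obstacle is the rigidity step: left-invertibility forces $\nu=\bar\nu^{-1}$, i.e. $|\nu|=1$. If $|\nu|\neq 1$ then $\nu$ and $\bar\nu^{-1}$ are distinct, so subtracting the two $\gamma_{l}$-identities produces an operator identity in $T_{1},T_{1}^{*}$ of strictly lower degree in $\nu$; left-invertibility of $T_{1}$ (which makes $T_{1}^{*}T_{1}$, hence $T_{1}^{l}$, nonzero) prevents this reduction from being vacuous. Iterating adjoint-and-subtract collapses the identities until one arrives at $T_{1}^{*}=c\,T_{1}$ for a scalar $c$; substituting back into $\gamma_{l}(\nu T_{1}^{*},T_{1})=0$ gives $(c\nu-1)^{l}T_{1}^{l}=0$ and thus $c\nu=1$, while taking adjoints of $T_{1}^{*}=cT_{1}$ gives $|c|=1$, hence $|\nu|=1$. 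The case $l=2$ is illustrative: one subtraction yields $(\nu+\bar\nu^{-1})(T_{1}^{*})^{2}=2T_{1}^{*}T_{1}$; left-invertibility forces $T_{1}$ two-sided invertible and hence $T_{1}^{*}=\tfrac{2}{\nu+\bar\nu^{-1}}T_{1}$, whose adjoint then forces $(|\nu|+|\nu|^{-1})^{2}=4$, i.e. $|\nu|=1$. The general $l$ proceeds along the same lines with more elaborate bookkeeping, and this rigidity is the main technical obstacle.
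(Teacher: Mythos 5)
Your overall architecture matches the paper: for (b)$\Rightarrow$(a) you feed the $l$- and $m$-symmetry hypotheses into the easy direction of the $\gamma_n$-splitting (the paper uses its Theorem~\ref{heltonthm} for the same purpose), and for (a)$\Rightarrow$(b) you first apply the hard direction of the splitting to obtain $\gamma_l(\nu T_1^*,T_1)=0$ and $\gamma_m(\nu^{-1}T_2^*,T_2)=0$ for some $\nu\in\mathbb C^*$, and then you must show $|\nu|=1$. Up to and including the derivation of the ``companion'' identity $\gamma_l(\bar\nu^{-1}T_1^*,T_1)=0$ by taking adjoints and reindexing, your proposal is correct.

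The gap is in the rigidity step. The paper establishes $|\nu|=1$ by a short operator-theoretic argument (Lemma~\ref{lambda1}): choose $\rho\in\sigma_{ap}(T_1)$ with $\rho\neq 0$ (possible since $T_1$ is left-invertible, hence not quasinilpotent), take unit vectors $h_i$ with $\|(T_1-\rho)h_i\|\to 0$, and evaluate $\langle\gamma_n(T_1^*,\lambda T_1)h_i,h_i\rangle$; in the limit this collapses to the scalar identity $(\lambda\rho-\bar\rho)^n=0$, forcing $|\lambda|=1$ in a way that is completely uniform in $n$. Your proposed replacement is a purely algebraic ``adjoint-and-subtract'' reduction, which you verify only for $l=2$ and then defer to ``more elaborate bookkeeping.'' That deferral conceals a genuine difficulty: after subtracting the two $\gamma_l$-identities and dividing by $\nu-\bar\nu^{-1}$ you obtain
\begin{equation*}
\sum_{k=1}^{l}(-1)^{l-k}\binom{l}{k}\Bigl(\textstyle\sum_{j=0}^{k-1}\nu^{\,k-1-j}\bar\nu^{-j}\Bigr)(T_1^*)^k T_1^{\,l-k}=0,
\end{equation*}
and its adjoint is not of the same shape, so there is no evident recursion that ``collapses'' these relations to $T_1^*=cT_1$ when $l\ge 3$. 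Moreover, a left-invertible $n$-symmetry has no reason in general to satisfy $T_1^*=cT_1$ (which would force $T_1$ normal), so the claimed endpoint of your reduction is itself suspicious for $l>2$; in the $l=2$ case it works only because the single subtracted relation happens to have the special form $(\nu+\bar\nu^{-1})(T_1^*)^2=2T_1^*T_1$, from which invertibility of $T_1^*T_1$ immediately gives invertibility of $T_1^*$. As written, the proposal does not establish $|\nu|=1$ for general $l$, and that is precisely the content of the paper's Lemma~\ref{lambda1}. I would encourage you either to adopt the spectral argument of Lemma~\ref{lambda1}, or, if you want an algebraic route, to prove the required rigidity as a separate lemma with an explicit induction on $l$ rather than a bookkeeping handwave.
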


\begin{theorem}\label{nsym2}
Suppose $H$ and $K$ are Hilbert spaces, $T_1\in B(H)$ and $T_2\in B(K)$. Then the following are equivalent:
\begin{enumerate}[label=(\alph*)]
\item\label{tensornsym} $T_1\otimes I_K+I_H\otimes T_2$ is an $n$-symmetry in $B(H\overline\otimes K)$.
\item\label{individualsnsym} There exist positive integers $l,m$ with $l+m=n+1$ and $\lambda\in\mathbb{C}$ so that $T_1+\lambda I_H$ is an $l$-symmetry in $B(H)$ and $T_2-\lambda I_K$ is an $m$-symmetry in $B(K)$.
\end{enumerate}
\end{theorem}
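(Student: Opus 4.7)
My plan is to reduce both directions of the theorem to the analysis of the operator-valued generating function
\[
\Phi_{S,T}(z) \;=\; \sum_{n \geq 0}\gamma_n(S,T)\frac{z^n}{n!} \;=\; e^{zS}e^{-zT}.
\]
The Pascal-type recursion $\gamma_{n+1}(S,T) = S\gamma_n(S,T) - \gamma_n(S,T)T$, checked by a direct expansion, shows that once $\gamma_n(S,T)$ vanishes, all higher $\gamma_{n+k}(S,T)$ vanish as well; hence $T$ is an $n$-symmetry if and only if $\Phi_{T^*,T}(z)$ is an operator-valued polynomial in $z$ of degree at most $n-1$. For $T = T_1 \otimes I_K + I_H \otimes T_2$, the commutation of the two tensor factors (and of their adjoints) gives
\[
\Phi_{T^*,T}(z) \;=\; \Phi_{T_1^*,T_1}(z) \otimes \Phi_{T_2^*,T_2}(z),
\]
and extracting the coefficient of $z^n$ yields the tensor-splitting identity
\[
\gamma_n(T^*,T) \;=\; \sum_{s+t=n}\binom{n}{s}\gamma_s(T_1^*,T_1)\otimes\gamma_t(T_2^*,T_2).
\]

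\textbf{Direction (b) $\Rightarrow$ (a).} Since $T$ is unchanged under $(T_1, T_2) \mapsto (T_1 + \lambda I_H, T_2 - \lambda I_K)$ for any $\lambda \in \mathbb{C}$, the splitting identity applies with the shifted operators. If $T_1 + \lambda I_H$ is an $l$-symmetry, Pascal forces $\gamma_s((T_1 + \lambda I)^*, T_1 + \lambda I) = 0$ for all $s \geq l$; analogously for the second factor. With $l + m = n + 1$, every pair $(s,t)$ with $s + t = n$ satisfies $s \geq l$ or $t \geq m$, so every term in the sum vanishes.

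\textbf{Direction (a) $\Rightarrow$ (b).} This is the hard direction. Assuming $T$ is an $n$-symmetry, $\Phi_{T_1^*,T_1}(z) \otimes \Phi_{T_2^*,T_2}(z)$ is an operator-valued polynomial in $z$ of degree at most $n - 1$. I plan to test against all matrix coefficients $\phi \otimes \psi$ to reduce to a scalar question: two entire scalar functions of finite exponential type whose product is a polynomial must each have the form (polynomial)$\cdot e^{\pm c z}$ with opposite exponents, by Hadamard/Weierstrass factorization. A uniformity argument then produces a single exponent $c \in \mathbb{C}$ and degrees $l - 1, m - 1$ with $l + m \leq n + 1$, making $e^{cz}\Phi_{T_1^*,T_1}(z)$ and $e^{-cz}\Phi_{T_2^*,T_2}(z)$ polynomials of those degrees. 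A short check yields the self-adjoint symmetry $\Phi_{T_1^*,T_1}(\bar z)^* = \Phi_{T_1^*,T_1}(-z)$, which forces $c \in i\mathbb{R}$, so the equation $\bar\lambda - \lambda = -c$ has a solution $\lambda \in \mathbb{C}$. The identity $\Phi_{(T_1 + \lambda I)^*, T_1 + \lambda I}(z) = e^{z(\bar\lambda - \lambda)} \Phi_{T_1^*,T_1}(z)$ then translates the polynomial-degree bounds into $T_1 + \lambda I$ being an $l$-symmetry and $T_2 - \lambda I$ an $m$-symmetry; one can inflate $l$ or $m$ if needed to achieve $l + m = n + 1$.

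\textbf{Main obstacle.} The subtle point is the uniformity of the exponent $c$ across matrix coefficients: the per-functional Hadamard decomposition a priori only yields an exponent $c_{\phi,\psi}$, and assembling these into a single $c$ requires controlling the exponential type of $\Phi_{T_i^*,T_i}$ in terms of the operator data. I expect either this control to come from the spectra of $T_i^* - T_i$, or, alternatively, the algebraic-geometric tensor-splitting machinery of Section~\ref{tensorsplittingsection}---which is designed precisely for the polynomial $p(x,y) = x - y$---can be invoked to deliver the splitting and the purely imaginary nature of $c$ in one stroke.
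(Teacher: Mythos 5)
Your plan takes a genuinely different route from the paper's. The paper places both directions inside the commutative-algebra framework: Proposition~\ref{new}, built on Theorem~\ref{iff} with $\delta(x)=x\otimes 1+1\otimes x$, $\delta(y)=y\otimes 1+1\otimes y$, supplies the splitting with the correct $l+m=n+1$ and produces \emph{some} $\lambda\in\mathbb{C}$; then Lemma~\ref{imaginary}, via an approximate-point-spectrum calculation, shows that $\lambda$ must be purely imaginary. Your generating-function identity $\Phi_{T^\ast,T}(z)=\Phi_{T_1^\ast,T_1}(z)\otimes\Phi_{T_2^\ast,T_2}(z)$ and the binomial extraction give an elegant, self-contained proof of (b)$\Rightarrow$(a), and your algebraic symmetry $\Phi_{T_1^\ast,T_1}(\bar z)^\ast=\Phi_{T_1^\ast,T_1}(-z)$ is a clean alternative to the paper's Lemma~\ref{imaginary} for forcing the exponent onto the imaginary axis.

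For (a)$\Rightarrow$(b), however, the proposal has a genuine gap --- one you flag yourself. You obtain per-matrix-coefficient Hadamard factorizations $\langle\Phi_{T_1^\ast,T_1}(z)h_1,h_2\rangle=e^{az}Q(z)$, but upgrading these to a single operator-level statement that $e^{cz}\Phi_{T_1^\ast,T_1}(z)$ is an operator-valued polynomial requires (i) that the exponent $a$ be independent of the test vectors, (ii) that the \emph{degree} bound be uniform so the operator Taylor coefficients genuinely vanish beyond some index, and (iii) that degenerate coefficients (those vanishing identically, or with $\langle h_1,h_2\rangle=0$ so the usual normalization $f(0)=1$ fails) do not break the factorization. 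None of these is supplied. In fact the route can be completed: fix a unit vector $k_0\in K$ and set $g(z)=\langle\Phi_{T_2^\ast,T_2}(z)k_0,k_0\rangle$, which satisfies $g(0)=1$; by hypothesis $\langle\Phi_{T_1^\ast,T_1}(z)h_1,h_2\rangle\,g(z)$ is a polynomial of degree at most $n-1$ for every $h_1,h_2$, with a \emph{uniform} degree bound, so $g(z)\Phi_{T_1^\ast,T_1}(z)$ is weakly, hence in operator sense, a polynomial of degree at most $n-1$; a single Hadamard factorization of the scalar $g$ (which has finitely many zeros because $g\cdot\langle\Phi_{T_1^\ast,T_1}(z)h_0,h_0\rangle$ is a nonzero polynomial) then supplies one exponent $c$ for which $e^{cz}\Phi_{T_1^\ast,T_1}(z)$ is an operator polynomial. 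But this argument does not appear in the proposal, and your stated fallback is to invoke the algebraic-geometric machinery of Section~\ref{tensorsplittingsection} --- which is precisely what the paper's own proof does --- so as written the hard direction is not independently established. Finally, note a sign slip: since $e^{cz}\Phi_{T_1^\ast,T_1}(z)$ is the polynomial factor, the equation to solve is $\bar\lambda-\lambda=c$ (not $-c$) so that $\Phi_{(T_1+\lambda I)^\ast,T_1+\lambda I}(z)=e^{(\bar\lambda-\lambda)z}\Phi_{T_1^\ast,T_1}(z)=e^{cz}\Phi_{T_1^\ast,T_1}(z)$.
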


The outline of this paper is as follows. In Sections~\ref{algsection} and~\ref{tensorsection}, we lay down the
algebraic foundation for dealing with expressions such as (\ref{betadef}) and show that our
problem can be considered in a commutative algebra setting. In Section~\ref{tensorsplittingsection}, we take advantage of the commutativity to prove our main theoretical results. In particular, we
will use Hilbert's Nullstellensatz extensively, and we use the notion of the height of an ideal see that quasi-homogeneous polynomials play a special role. In Section~\ref{opsection}, we need to briefly explain how the more
general algebra results from previous sections imply Theorem \ref{ninversethm}, and show how the theory applies to $n$-symmetries and the Helton class of an operator. Finally in Section~\ref{anothersection}, we study the nilpotent pertubation of a left $n$-inverse. In doing so, we see that the algebraic results apply in a much stronger way for $n$-symmetries, leading to the proof of Theorem~\ref{nsym2}.
%




\section{Definitions and Algebraic Foundation}\label{algsection}

Let $\mathbb{C}$ denote the field
of complex numbers, and let $\mathbb{C}^{\ast}$ denote the set of nonzero
complex numbers. For us, a $\mathbb{C}$-algebra $A$ is a complex vector space which is also an algebra
with an identity. For elements $S,T$ of a $\mathbb{C}$-algebra $A$, we define $\beta_n(S,T)$ and $n$-inverses as in the introduction.

%

Note that if $S$ is a
left $n$-inverse of $T,\ $then $S$ is a left $m$-inverse of $T$ for all $m\geq n.
$ This follows from the recursive formula
\begin{equation}
\beta_{n}(S,T)=S\beta_{n-1}(S,T)T-\beta_{n-1}(S,T).\label{recursive}
\end{equation}
It is also true that $T$ has a left $n$-inverse for all $n$ if and only if $T$ is left-invertible. This follows from (\ref{recursive}) and
\[
\beta_n(S,T)=S\left(\sum_{k=1}^n(-1)^{n-k}\binom{n}{k}S^{k-1}T^k\right)+(-1)^n.
\]
Because of this fact, we avoid the term \emph{$n$-invertible} even though it appears in the literature. We also define $\gamma_n(S,T)$ 
as in the introduction.

We begin by giving a general algebraic formalism for the construction of expressions such as $\beta_n(S,T)$ and $\gamma_n(S,T)$. In particular, we will make precise our loose explanation in the introduction that ``powers of $S$ are kept to the left of powers of $T$''. This is important, for example, because if $A$ is non-commutative, then of course $\beta_n(S,T)$ may not be equal to $(ST-1)^n$. To deal with this discrepancy, we define a vector space homomorphism from the
free commutative $\mathbb{C}$-algebra on $x,y$ to the free $\mathbb{C}%
$-algebra on $X,Y$
\[
\Phi:\mathbb{C}[x,y]\rightarrow\mathbb{C}\langle X,Y\rangle
\]
defined on the monomial basis of $\mathbb{C}[x,y]$ by 
$$\Phi(x^{i}y^{j})=X^{i}Y^{j}.$$
Here, $\mathbb{C}[x,y]$ is the commutative $\mathbb{C}%
$-algebra of formal polynomials in two \emph{commuting} variables $x,y$, and
$\mathbb{C}\langle X,Y\rangle$ is the $\mathbb{C}$-algebra of formal
polynomials in two \emph{non-commuting} variables $X,Y$ (i.e. formal linear
combinations of words in $X,Y$). In what follows, set
\[
R=\mathbb{C}[x,y],\quad F=\mathbb{C}\langle X,Y\rangle.
\]

Again, $\Phi$ is only a vector space homomorphism and not a $\mathbb{C}%
$-algebra homomorphism, so the multiplicative structure is not preserved. For
example, $yx=xy$ in $R$, but
\[
\Phi(yx)=XY\neq YX=\Phi(y)\Phi(x).
\]
However, $\Phi$ is exactly the map we need to construct expressions like $\beta_n$ and $\gamma_n$ because
\[
\Phi\left(  (xy-1)^{n}\right)  
=\beta_n(X,Y),
\quad \Phi\left((x-y)^n)\right)=\gamma_n(X,Y).
\]

Since $F$ is a free object in the category of $\mathbb{C}$-algebras, for any
$\mathbb{C}$-algebra $A$ and $S,T\in A$, there is a unique $\mathbb{C}%
$-algebra homomorphism
\[
\kappa:F\rightarrow A
\]
so that
\[
\kappa(X)=S,\quad\kappa(Y)=T.
\]

For a given element $\omega\in F$, we may write $\omega(S,T)$ for
$\kappa(\omega)$. Using this notation, $S$
is a left $n$-inverse of $T$ if and only if 
$$\Phi((xy-1)^{n})(S,T)=0.$$

Although $\Phi$ is not a $\mathbb{C}$-algebra homomorphism, it does behave like one in a crucial way shown in the following proposition.

\begin{proposition}
\label{prop:phi} The inverse image of any two-sided
ideal in $\mathbb{C}\langle X,Y\rangle$ under $\Phi$ is an ideal in $\mathbb{C}[x,y]$.
\end{proposition}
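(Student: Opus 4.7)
The plan is to verify the ideal property directly using the defining formula for $\Phi$. Let $J \subseteq F$ be a two-sided ideal and set $I = \Phi^{-1}(J)$. Since $\Phi$ is a vector space homomorphism, $I$ is automatically a linear subspace of $R$, so it remains to show closure under multiplication by arbitrary elements of $R$. By bilinearity of multiplication it suffices to show that for any monomial $x^a y^b \in R$ and any $p \in I$, the product $x^a y^b \cdot p$ lies in $I$.

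The key observation I would establish is the identity
\[
\Phi(x^a y^b \cdot p) = X^a \, \Phi(p) \, Y^b \qquad \text{for all } p \in R, \; a,b \geq 0.
\]
By linearity in $p$, this reduces to checking it for a single basis monomial $p = x^i y^j$, where both sides are equal to $X^{a+i} Y^{b+j}$ by the definition $\Phi(x^i y^j) = X^i Y^j$ and the associativity in $F$. The crucial feature being exploited here is that $\Phi$ places all $X$'s to the left of all $Y$'s, so left multiplication by $x^a$ in $R$ translates to left multiplication by $X^a$ in $F$, and right multiplication by $y^b$ in $R$ translates to right multiplication by $Y^b$ in $F$.

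Once this identity is in hand, the conclusion is immediate: if $p \in I$ then $\Phi(p) \in J$, and since $J$ is a two-sided ideal of $F$, we get $X^a \Phi(p) Y^b \in J$, i.e., $\Phi(x^a y^b p) \in J$, i.e., $x^a y^b p \in I$. Extending by linearity over all $q \in R$ shows $qp \in I$, which is the desired ideal condition.

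There is no genuine obstacle here; the content of the proposition is really the slogan that although $\Phi$ is not multiplicative, it does respect the specific ``left $x$, right $y$'' multiplication pattern inherited from the monomial basis, which is precisely enough to pull back two-sided ideals to ideals. It is worth noting that the two-sidedness of $J$ is used in an essential way — a one-sided ideal in $F$ would only give closure of $I$ under multiplication by $x^a$ or $y^b$ separately, not by arbitrary monomials $x^a y^b$.
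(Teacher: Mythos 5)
Your proof is correct and takes essentially the same route as the paper: reduce to multiplication by a monomial $x^a y^b$, then observe that $\Phi(x^a y^b \cdot p) = X^a\,\Phi(p)\,Y^b$ and invoke two-sidedness of the ideal in $F$. The paper carries out the same computation by expanding $p$ in the monomial basis rather than isolating the identity as a separate claim, but the argument is the same.
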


\begin{proof}
Let $\mathcal I$ be any two-sided ideal in  $\mathbb{C}\langle X,Y\rangle$. Since $\Phi$ is a vector space
homomorphism, it will suffice to check that $\Phi^{-1}(\mathcal I)$ is closed under
multiplication by a monomial.

Suppose $f(x,y)=\sum k_{ij}x^{i}y^{j}$, is such that $f\in\Phi^{-1}(\mathcal I)$, where all but finitely many $k_{ij}$
are nonzero. Then for any
$a,b\geq0$, we have
\[
\Phi(x^{a}y^{b}\cdot f(x,y))=\Phi\left(  \sum k_{ij}x^{i+a}y^{j+b}\right)
=\sum k_{ij}X^{i+a}Y^{j+b}=X^{a}\cdot\left(  \sum k_{ij}X^{i}Y^{j}\right)
\cdot Y^{b}.
\]
The righthand side is clearly in $\mathcal I$, so $x^{a}y^{b}\cdot f(x,y)$ is in
$\Phi^{-1}(\mathcal I)$.
\end{proof}

Notice that the proof of
Proposition~\ref{prop:phi} would not go through with three or more
variables, and indeed the conclusion would not hold. If $\Psi:\mathbb{C}%
[x,y,z]\rightarrow\mathbb{C}\langle X,Y,Z\rangle$ is the analogous vector
space homomorphism for three variables, and $\mathcal I\subset\mathbb{C}\langle
X,Y,Z\rangle$ is the two-sided ideal generated by $XZ$, then $xz\in\Psi
^{-1}(\mathcal I)$ but $xyz\notin\Psi^{-1}(\mathcal I)$.

\begin{lemma}\label{prop:phi2}
Let $I$ be an ideal in $\mathbb{C}[x,y]$. The inverse image under $\Phi$ of the ideal generated by $\Phi(I)$ is equal to $I$.
\end{lemma}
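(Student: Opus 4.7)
The plan is to exploit an ``abelianization'' ring homomorphism going in the opposite direction of $\Phi$. Specifically, by the universal property of the free $\mathbb{C}$-algebra $F=\mathbb{C}\langle X,Y\rangle$, there is a unique $\mathbb{C}$-algebra homomorphism
\[
q:\mathbb{C}\langle X,Y\rangle\to\mathbb{C}[x,y]
\]
with $q(X)=x$ and $q(Y)=y$. Comparing on the monomial basis of $\mathbb{C}[x,y]$ shows that $q\circ\Phi$ is the identity. Unlike $\Phi$, the map $q$ is an honest (surjective) ring homomorphism, so it transports ideals well; this is the mechanism I will use to pull an identity in $F$ back to one in $\mathbb{C}[x,y]$.

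Let $J$ denote the two-sided ideal in $F$ generated by $\Phi(I)$. The inclusion $I\subseteq\Phi^{-1}(J)$ is immediate since $\Phi(I)\subseteq J$. For the reverse inclusion, I would first verify that $q(J)=I$. Because $q$ is a surjective ring homomorphism, $q(J)$ is automatically an ideal in $\mathbb{C}[x,y]$. Every element of $J$ is a finite sum of terms of the form $w_1\,\Phi(g)\,w_2$ with $g\in I$ and $w_1,w_2\in F$, and $q$ sends such a term to $q(w_1)\,g\,q(w_2)$, which lies in $I$ since $I$ is an ideal; hence $q(J)\subseteq I$. The opposite inclusion $I=q(\Phi(I))\subseteq q(J)$ is clear.

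With $q(J)=I$ in hand, the remaining inclusion is immediate: if $f\in\Phi^{-1}(J)$, then $\Phi(f)\in J$, so
\[
f=q(\Phi(f))\in q(J)=I.
\]

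I do not foresee a real obstacle; the only conceptual point is to recognize that although $\Phi$ fails to be a ring homomorphism, the one-sided inverse $q$ going the ``wrong'' way is one, and that fact is enough to transport ideal-theoretic information from $F$ back to $\mathbb{C}[x,y]$. Proposition~\ref{prop:phi} already guarantees that $\Phi^{-1}(J)$ is an ideal; the present argument additionally pins down exactly which ideal it is.
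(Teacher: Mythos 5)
Your proof is correct and is essentially the same argument as the paper's: the abelianization $q$, a ring-homomorphism left inverse of $\Phi$, is precisely what is needed to verify the paper's terse observation that $\Phi(I)=\Phi(R)\cap\langle\Phi(I)\rangle$, and your step $f=q(\Phi(f))\in q(J)=I$ is the corresponding deduction. You have simply made explicit what the paper leaves implicit.
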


\begin{proof}
This follows immediately from the observation that
\[
\Phi(I)=\Phi(R)\cap\langle\Phi(I)\rangle.
\qedhere
\]
\end{proof}

\section{Tensor Products}\label{tensorsection}

This paper concerns itself with questions about how relations in the tensor product of two possibly non-commutative $\mathbb{C}$-algebras descend to relations in the individual $\mathbb{C}$-algebras and vice versa. In this section, we will build a general framework which will allow  us to use Proposition~\ref{prop:phi} to convert such questions to the commutative algebra setting. Our motivation lies in proving Theorem~\ref{ninversethm}, but in this section we maintain a much more general perspective that can be applied to other relations such as $\gamma_n(S,T)$, and also to nilpotent perturbations in Section~\ref{anothersection}.

In what follows, suppose $A_{1}$ and $A_{2}$ are $\mathbb{C}$-algebras, and
$S_{i},T_{i}\in A_{i}$. Also, define
\[%
\begin{array}
[c]{ccccccccc}%
\kappa_{i}: & F & \rightarrow & A_{i}, & i=1,2
\\
& X & \mapsto & S_{i} 
\\
& Y & \mapsto & T_{i}
\end{array}
\]

Now let
$$\delta:R\rightarrow R\otimes R$$
be any injective $\mathbb{C}$-algebra homomorphism. Letting $S=\mathbb{C}[x_{1},y_{1},x_{2},y_{2}]$, we also
remind ourselves of the $\mathbb{C}$-algebra isomorphism
\begin{equation*}%
\begin{array}
[c]{cccccc}%
\mu & : & R\otimes R & \rightarrow & S & \\
&  & x^{\alpha}y^{\beta}\otimes x^{\gamma}y^{\delta} & \mapsto & x_{1}%
^{\alpha}y_{1}^{\beta}x_{2}^{\gamma}y_{2}^{\delta} &
\end{array}
\label{mueqn}.%
\end{equation*}

We then define $\epsilon$ and $\kappa$ so that the following diagram commutes.

\begin{equation}\label{commdiageq}
\begin{tikzpicture}[baseline=(current  bounding  box.center),->]
 \matrix [matrix of math nodes,row sep=1.5cm,column sep=1.5cm]
 {
 			&	|(R)| R			& |(F)| F			&					\\
	|(S)| S	&	|(RR)| R\otimes R	& |(FF)| F\otimes F	& |(AA)| A_1\otimes A_2	\\
  };
  \tikzstyle{every node}=[midway,auto,font=\small]
  
  \draw (R)		to node {$\Phi$}					(F);
  \draw (R)		to node {$\delta$}					(RR);
  \draw (F)		to node {$\epsilon$}					(FF);
  \draw (RR)	to node {$\Phi\otimes\Phi$}			(FF);
  \draw (F)		to node {$\kappa$}					(AA);
  \draw (FF)	to node {$\kappa_1\otimes\kappa_2$}	(AA);
  \draw (RR) 	to node {$\mu$}					(S);
	
\end{tikzpicture}
\end{equation}

In particular,
\[
\epsilon(X):=(\Phi\otimes\Phi)\delta(x),\quad\epsilon(Y):=(\Phi\otimes\Phi)\delta(y),\quad\kappa:=(\kappa_1\otimes\kappa_2)\epsilon
\]

%
%


Recall the following linear algebra fact about tensor products.

\begin{lemma}
\label{kernelsplit} If $\phi_{1}:V_{1}\rightarrow W_{1}$ and $\phi_{2}%
:V_{2}\rightarrow W_{2}$ are vector space homomorphisms, then
\[
\ker(\phi_{1}\otimes\phi_{2})=(\ker(\phi_{1})\otimes V_{2})+(V_{1}\otimes
\ker(\phi_{2})).
\]

\end{lemma}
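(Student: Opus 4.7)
The one inclusion $(\ker(\phi_1)\otimes V_2)+(V_1\otimes\ker(\phi_2))\subseteq \ker(\phi_1\otimes\phi_2)$ is immediate from bilinearity: $(\phi_1\otimes\phi_2)(k\otimes v)=0\otimes\phi_2(v)=0$ whenever $k\in\ker(\phi_1)$, and similarly on the other summand. All the real work is in the reverse containment, so my plan is to isolate that.

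The cleanest approach, in my opinion, is a basis-chasing argument, which is concrete and avoids any appeal to flatness machinery (since we are only working with vector spaces over $\mathbb{C}$, this is harmless). First, I would choose a basis $\{e_\alpha\}$ of $\ker(\phi_1)$ and extend it to a basis $\{e_\alpha\}\cup\{f_\beta\}$ of $V_1$; by construction $\{\phi_1(f_\beta)\}$ is linearly independent in $W_1$. Do the analogous construction for $\phi_2$: a basis $\{g_\gamma\}$ of $\ker(\phi_2)$ extended to $\{g_\gamma\}\cup\{h_\delta\}$ of $V_2$, with $\{\phi_2(h_\delta)\}$ linearly independent in $W_2$. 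Then the four families of elementary tensors $\{e_\alpha\otimes g_\gamma\},\{e_\alpha\otimes h_\delta\},\{f_\beta\otimes g_\gamma\},\{f_\beta\otimes h_\delta\}$ form a basis of $V_1\otimes V_2$.

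Writing a general $v\in V_1\otimes V_2$ in this basis and applying $\phi_1\otimes\phi_2$, all terms involving an $e_\alpha$ or a $g_\gamma$ vanish, so the image consists purely of the linear combination of $\phi_1(f_\beta)\otimes\phi_2(h_\delta)$, with the same coefficients as on the $f_\beta\otimes h_\delta$ part of $v$. The key observation is that $\{\phi_1(f_\beta)\otimes\phi_2(h_\delta)\}$ is a linearly independent set in $W_1\otimes W_2$ (this is precisely the statement that the tensor product of linearly independent sets is linearly independent, the one standard fact I will cite). Hence $(\phi_1\otimes\phi_2)(v)=0$ forces the $f_\beta\otimes h_\delta$ coefficients of $v$ to vanish, leaving only terms of the remaining three types, each of which lies in $(\ker(\phi_1)\otimes V_2)+(V_1\otimes\ker(\phi_2))$. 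That finishes the inclusion.

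The only subtle point, and hence the thing I would be careful to state precisely rather than skip, is the linear independence of $\{\phi_1(f_\beta)\otimes\phi_2(h_\delta)\}$; everything else is bookkeeping.
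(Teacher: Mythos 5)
Your proof is correct and follows essentially the same strategy as the paper's: decompose each $V_i$ as $\ker(\phi_i)$ plus a complement, split $V_1\otimes V_2$ into the resulting four pieces, and observe that $\phi_1\otimes\phi_2$ is injective on the complement-tensor-complement piece. The only difference is presentational: the paper works with abstract complementary subspaces and says the restriction to $V_1'\otimes V_2'$ is an isomorphism onto $\mathrm{im}(\phi_1)\otimes\mathrm{im}(\phi_2)$, while you realize the same splitting by extending bases of the kernels and invoke ``tensor products of linearly independent families are linearly independent,'' which is the basis-level avatar of that isomorphism. Your identification of the linear-independence fact as the one genuinely nontrivial input is exactly right.
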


\begin{proof}
This is a standard result, but we include the proof for completeness. Let
$V_{i}=V_{i}^{\prime}\oplus\ker(\phi_{i})$ be a splitting of the epimorphism
$\phi_{i}:V_{i}\rightarrow\mathrm{im}(\phi_{i})$ for $i=1,2$. Then
\[
V_{1}\otimes V_{2}=(V_{1}^{\prime}\otimes V_{2}^{\prime})\oplus(V_{1}^{\prime
}\otimes\ker(\phi_{2}))\oplus(\ker(\phi_{1})\otimes V_{2}^{\prime})\oplus
(\ker(\phi_{1})\otimes\ker(\phi_{2})).
\]
However, the restriction $\phi_{1}\otimes\phi_{2}:V_{1}^{\prime}\otimes
V_{2}^{\prime}\rightarrow\mathrm{im}(\phi_{1})\otimes\mathrm{im}(\phi_{2})$ is
an isomorphism because of the splitting. Hence
\begin{align*}
\ker(\phi_{1}\otimes\phi_{2})  & =(V_{1}^{\prime}\otimes\ker(\phi_{2}%
))\oplus(\ker(\phi_{1})\otimes V_{2}^{\prime})\oplus(\ker(\phi_{1})\otimes
\ker(\phi_{2}))\\
& =(\ker(\phi_{1})\otimes V_{2})+(V_{1}\otimes\ker(\phi_{2})).\qedhere
\end{align*}
\end{proof}

For any ideal $I\subseteq R$, we write 
$$I'=\mu(I\otimes R),\qquad I''=\mu(R\otimes I)$$
for the corresponding ideals in $S$. Let 
$$I:=\Phi^{-1}(\ker\kappa_1),\quad J:=\Phi^{-1}(\ker\kappa_2),$$
so that $I$ and $J$ are ideals in $R$ by Lemma~\ref{prop:phi}. We also will define for any $p\in R$,
$$\hat p:=\mu\delta(p).$$

In what follows, we adopt the above notation under the assumptions that $A_1,A_2$ are $\mathbb{C}$ algebras, $S_i,T_i\in A_i$ for $i=1,2$, and that $\delta:R\rightarrow R\otimes R$ is any injective $\mathbb{C}$-algebra homomorphism.

The following theorem allows us to prove facts about relations in $A_1\otimes A_2$ in the commutative algebra $S$.

\begin{theorem}\label{transthm}
Let $p\in R$. Then $\hat p\in I' + J''$ if and only if $\Phi(p)\in\ker\kappa$.
%
\end{theorem}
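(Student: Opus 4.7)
The plan is to reduce the statement to a kernel computation in the commutative ring $R\otimes R$ and then invoke Lemma~\ref{kernelsplit}. First I would unpack $\kappa\circ\Phi$ using the commutativity of diagram~\eqref{commdiageq}. From $\epsilon\circ\Phi=(\Phi\otimes\Phi)\circ\delta$ and $\kappa=(\kappa_1\otimes\kappa_2)\circ\epsilon$, one obtains
\[
\kappa\circ\Phi \;=\; (\kappa_1\otimes\kappa_2)\circ(\Phi\otimes\Phi)\circ\delta
\;=\; \bigl((\kappa_1\circ\Phi)\otimes(\kappa_2\circ\Phi)\bigr)\circ\delta,
\]
where the last equality is a direct check on simple tensors $p\otimes q\in R\otimes R$. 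Hence the condition $\Phi(p)\in\ker\kappa$ is equivalent to $\delta(p)$ lying in the kernel of the vector space homomorphism $(\kappa_1\circ\Phi)\otimes(\kappa_2\circ\Phi)\colon R\otimes R\to A_1\otimes A_2$.

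Next I would apply Lemma~\ref{kernelsplit} to this tensor-product map, which identifies its kernel as
\[
\ker(\kappa_1\circ\Phi)\otimes R \;+\; R\otimes\ker(\kappa_2\circ\Phi).
\]
The tautology $\ker(\kappa_i\circ\Phi)=\Phi^{-1}(\ker\kappa_i)$ then gives $\ker(\kappa_1\circ\Phi)=I$ and $\ker(\kappa_2\circ\Phi)=J$. So the condition from the previous paragraph becomes $\delta(p)\in I\otimes R + R\otimes J$.

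Finally, I would transport the statement through the $\mathbb{C}$-algebra isomorphism $\mu\colon R\otimes R\to S$. By construction $\mu(I\otimes R)=I'$ and $\mu(R\otimes J)=J''$, and $\mu(\delta(p))=\hat p$, so
\[
\delta(p)\in I\otimes R+R\otimes J \;\Longleftrightarrow\; \hat p\in I'+J''.
\]
Chaining the three equivalences yields the theorem.

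The proof has no real obstacle beyond careful bookkeeping; the substantive content is entirely packaged into Lemma~\ref{kernelsplit}. The only point that deserves a line of justification is the identity $(\kappa_1\otimes\kappa_2)\circ(\Phi\otimes\Phi)=(\kappa_1\circ\Phi)\otimes(\kappa_2\circ\Phi)$, which is not automatic from general category theory because $\Phi$ is merely a vector space map, but it holds since the tensor product of vector space homomorphisms is defined pointwise on simple tensors and each $\kappa_i$ is linear.
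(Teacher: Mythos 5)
Your proof is correct and follows the same route as the paper: commutativity of diagram~\eqref{commdiageq} to reduce $\Phi(p)\in\ker\kappa$ to $\delta(p)\in\ker\bigl((\kappa_1\Phi)\otimes(\kappa_2\Phi)\bigr)$, then Lemma~\ref{kernelsplit} to identify that kernel as $I\otimes R + R\otimes J$, then transport by $\mu$. The only addition is your explicit check that $(\kappa_1\otimes\kappa_2)\circ(\Phi\otimes\Phi)=(\kappa_1\circ\Phi)\otimes(\kappa_2\circ\Phi)$, which the paper leaves implicit.
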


\begin{proof}
By Lemma~\ref{kernelsplit}, we have
$$\ker(\kappa_1\Phi\otimes\kappa_2\Phi)=I\otimes R + R\otimes J.$$
Hence by the commutativity of (\ref{commdiageq}), $\Phi(p)\in\ker(\kappa)$  if and only of if 
\begin{equation}\label{deltap}
\delta(p)\in I\otimes R + R\otimes J
\end{equation}
Applying the isomorphism $\mu$, (\ref{deltap}) is equivalent to $\hat p\in I'+J''$.
\end{proof}


The following two results will be used in proving all of our major theorems.

\begin{lemma}\label{weaktplem}
Suppose that $p,q_1,q_2\in R$ and
$$\hat p\in\langle q_1(x_1,y_1),q_2(x_2,y_2)\rangle.$$

If 
$\Phi(q_1^l)\in\ker\kappa_1$, $\Phi(q_2^m)\in\ker\kappa_2$, then $\Phi(p^n)\in\ker\kappa$, where $n=l+m-1$.
\end{lemma}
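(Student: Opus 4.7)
The plan is to reduce the claim to a statement about the commutative algebra $S$ via Theorem~\ref{transthm}, and then use a binomial expansion together with a pigeon-hole argument on exponents. First, by Theorem~\ref{transthm}, showing $\Phi(p^n)\in\ker\kappa$ is equivalent to showing $\widehat{p^n}\in I'+J''$. Since $\mu$ and $\delta$ are both $\mathbb{C}$-algebra homomorphisms, the map $p\mapsto \hat p=\mu\delta(p)$ is multiplicative, so $\widehat{p^n}=\hat p^{\,n}$.

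Next, using the hypothesis, I would write $\hat p=a\,Q_1+b\,Q_2$ where $Q_1=q_1(x_1,y_1)$, $Q_2=q_2(x_2,y_2)$, and $a,b\in S$. Because $S$ is commutative, the binomial theorem gives
\[
\hat p^{\,n}=\sum_{k=0}^n\binom{n}{k}a^k b^{n-k}\,Q_1^{\,k}Q_2^{\,n-k}.
\]
The translation of the hypotheses on $q_1,q_2$ via Lemma~\ref{prop:phi2} is that $q_1^{\,l}\in I$ and $q_2^{\,m}\in J$, which means $Q_1^{\,l}\in I'$ and $Q_2^{\,m}\in J''$ (by the definitions of the primed and double-primed ideals).

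Now I would invoke the pigeon-hole argument enabled by the choice $n=l+m-1$: for each $0\le k\le n$ either $k\ge l$ or $n-k\ge m$, for otherwise $k\le l-1$ and $n-k\le m-1$ would give $n\le l+m-2$, a contradiction. In the first case $Q_1^{\,k}=Q_1^{\,l}\cdot Q_1^{\,k-l}\in I'$, and in the second case $Q_2^{\,n-k}\in J''$; in either case the term $a^kb^{n-k}Q_1^{\,k}Q_2^{\,n-k}$ lies in $I'+J''$ since these are ideals. Summing gives $\hat p^{\,n}\in I'+J''$, and invoking Theorem~\ref{transthm} in the other direction concludes the proof.

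There is no substantive obstacle here once the commutative reformulation of Theorem~\ref{transthm} is available; the entire argument is the pigeon-hole observation that $l+m-1$ is exactly the threshold that forces every monomial in the expansion of $\hat p^{\,n}$ to absorb either a full power $Q_1^{\,l}$ or a full power $Q_2^{\,m}$. The slight subtlety worth emphasizing is that the binomial expansion is legitimate only after passing to the commutative ring $S$ via $\mu\delta$, which is precisely the purpose of the machinery developed in Section~\ref{tensorsection}.
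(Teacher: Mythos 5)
Your proof is correct and follows essentially the same route as the paper: decompose $\hat p$ as an $S$-linear combination of $q_1(x_1,y_1)$ and $q_2(x_2,y_2)$, expand $\hat p^{\,n}$ by the binomial theorem in the commutative ring $S$, and observe that since $n=l+m-1$ every term must contain a full factor of $q_1(x_1,y_1)^l$ or $q_2(x_2,y_2)^m$ and hence lies in $I'$ or $J''$. The only cosmetic differences are the index convention in the binomial sum and the (unneeded) citation of Lemma~\ref{prop:phi2}, where the fact $q_1^l\in I$ follows directly from the definition $I=\Phi^{-1}(\ker\kappa_1)$.
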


\begin{proof}
By assumption, there exist $f,g\in S$ such that
\begin{equation}\label{generalexpansion}
\hat p^{n}=\sum_{k=0}^{n}\binom{n}{k}f^{n-k}q_1(x_1,y_1)^{n-k}g^kq_2(x_2,y_2)^k.
\end{equation}

Notice that every summand of (\ref{generalexpansion}), is in either $I'$ or $J''$. Thus $\Phi(p^{n})\in\ker\kappa$.
\end{proof}

The following theorem is a variation on the above lemma, which operates under more technical conditions. However, it will allow us to obtain sharp ``if and only if'' statements like Theorems~\ref{ninversethm}, \ref{nsym}, and \ref{nsym2}.

\begin{theorem}\label{iff}
Suppose that $p,q_1,q_2\in R$ are such that there exist $f,g\in R$ so that
\begin{enumerate}
\item $\hat p(x_1,x_2,y_1,y_2)=f(x_2,y_2) q_1(x_1,y_1)+g(x_1,y_1)q_2(x_2,y_2)$.
\item $\Phi(q_1^L)\in\ker\kappa_1$ and $\Phi(q_2^M)\in\ker\kappa_2$ for large enough $L$ and $M$. 
\item\label{notfunky} For any $i,j$, $f^iq_2^j\in J$ if and only if $q_2^j\in J$, and $g^iq_1^j\in I$ if and only if $q_1^j\in I$.
\end{enumerate}
Then $\Phi(p^n)\in\ker\kappa$ if and only if there exist $l$ and $m$ so that $l+m=n+1$ and $\Phi(q_1^l)\in \ker\kappa_1$ and $\Phi(q_2^m)\in \ker\kappa_2$.
\end{theorem}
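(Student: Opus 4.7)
\emph{The plan.} Call the two implications ``if'' and ``only if''. The ``if'' direction is a routine application of Lemma~\ref{weaktplem}: given $l,m$ with $l+m=n+1$ and $\Phi(q_1^l)\in\ker\kappa_1$, $\Phi(q_2^m)\in\ker\kappa_2$, hypothesis~(1) gives $\hat p\in\langle q_1(x_1,y_1),q_2(x_2,y_2)\rangle$, and the lemma delivers $\Phi(p^n)\in\ker\kappa$. For the ``only if'' direction, I would use Theorem~\ref{transthm} to rephrase the problem: $\Phi(p^n)\in\ker\kappa$ is equivalent to $(\hat p)^n\in I'+J''$, which, after applying the isomorphism $\mu^{-1}$, is equivalent to $(\hat p)^n=0$ in the commutative quotient $S/(I'+J'')\cong (R/I)\otimes_{\mathbb C}(R/J)$.

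Hypothesis~(2) lets me pick minimal positive integers $L_0,M_0$ with $q_1^{L_0}\in I$ and $q_2^{M_0}\in J$. Using hypothesis~(1) and commutativity in $S$, I would expand
\[
(\hat p)^n=\sum_{k=0}^{n}\binom{n}{k}f(x_2,y_2)^{n-k}\,q_1(x_1,y_1)^{n-k}\,g(x_1,y_1)^{k}\,q_2(x_2,y_2)^{k},
\]
and under $\mu^{-1}$ each summand becomes the pure tensor $\binom{n}{k}\bigl(q_1^{n-k}g^k\bigr)\otimes\bigl(f^{n-k}q_2^k\bigr)$ in $R\otimes R$. Its image in $(R/I)\otimes (R/J)$ vanishes iff either $q_1^{n-k}g^k\in I$ or $f^{n-k}q_2^k\in J$; by hypothesis~(3) these reduce respectively to $q_1^{n-k}\in I$ (i.e.\ $n-k\geq L_0$) and $q_2^{k}\in J$ (i.e.\ $k\geq M_0$).

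The decisive step is the choice $n=L_0+M_0-2$. The surviving range of $k$ is $k\leq M_0-1$ and $n-k\leq L_0-1$, forcing $k=M_0-1$ uniquely, so modulo $I'+J''$ one has
\[
(\hat p)^{L_0+M_0-2}\equiv\binom{L_0+M_0-2}{M_0-1}\,\overline{q_1^{L_0-1}g^{M_0-1}}\otimes\overline{f^{L_0-1}q_2^{M_0-1}}.
\]
Both tensor factors are nonzero by hypothesis~(3) combined with the minimality of $L_0,M_0$, the binomial coefficient is nonzero, and a pure tensor of two nonzero vectors in a tensor product of $\mathbb C$-vector spaces is nonzero. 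Hence $\Phi(p^{L_0+M_0-2})\notin\ker\kappa$, so $\Phi(p^n)\in\ker\kappa$ forces $n\geq L_0+M_0-1$; taking $l=L_0$ and $m=n+1-L_0\geq M_0$ then yields the desired pair. The main delicate point I expect is the simultaneous survival analysis of the binomial summands, but hypothesis~(3) is engineered precisely so that multiplication by powers of $f$ or $g$ never creates new zero-divisor relations against powers of $q_2$ or $q_1$; this is exactly what makes the entire expansion collapse to a single non-vanishing pure tensor.
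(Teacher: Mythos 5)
Your proof is correct and follows essentially the same route as the paper: both directions hinge on the binomial expansion of $(\hat p)^n$ from Lemma~\ref{weaktplem}, with the ``only if'' direction showing that for minimal exponents $L_0,M_0$ and $n=L_0+M_0-2$ exactly one summand survives modulo $I'+J''$, yielding a nonzero pure tensor in $(R/I)\otimes(R/J)$. (Your write-up is in fact a bit more careful than the paper's, which drops the binomial coefficient $\binom{L_0+M_0-2}{M_0-1}$ from its displayed formula.)
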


\begin{proof}
The ``if'' part of the statement follows directly from Lemma~\ref{weaktplem}. In fact, if we assume that $l,m$ are minimal so that $\Phi(q_1^l)\in\ker\kappa_1$ and $\Phi(q_2^m)\in\ker\kappa_2$, then applying (\ref{generalexpansion}) with $n=l+m-1$, we have
$$\hat p^{l+m-2}=f(x_2,y_2)^{l-1}q_1(x_1,y_1)^{l-1}g(x_1,y_1)^{m-1}q_2(x_2,y_2)^{m-1}\text{ in $S/(I'+J'').$}$$
Applying $\mu^{-1}$ to both sides of the expression, we have
\begin{equation*}\label{epsiloneq}
\delta(p^{l+m-2})=g^{m-1}q_1^{l-1}\otimes f^{l-1}q_2^{m-1}\text{ in $R/I\otimes R/J$.}
\end{equation*}
By Condition~\ref{notfunky}, we see that $g^{m-1}q_1^{l-1}\notin I$ and $f^{l-1}q_2^{m-1}\notin J$, so by the commutativity of (\ref{commdiageq}), $\Phi(p^{l+m-2})\notin\ker\kappa$.


Thus if we assume $\Phi(p^n)\in\ker\kappa$, the minimal $l,m$ chosen above must add to no more than $n+1$, proving the ``only if'' part of the theorem.
\end{proof}

\begin{remark}\label{generate}
Condition~\ref{notfunky} of Lemma~\ref{iff} is a rather technical condition, and we take care to include assumptions throughout the exposition that ensure it will be satisfied. However, we note that Condition~\ref{notfunky} is satisfied when $f,q_2$ generate $R$ and $g,q_1$ generate $R$. This follows from Lemma~\ref{minimality} below.
\end{remark}

\begin{lemma}\label{minimality}
Suppose $g\in R$, and $I\subseteq R$ is an ideal so that $g^m\in I$, but $g^{m-1}\notin I$. If $f\in R$ is such that $f,g$ generate $R$, then for any $k\geq0$, $f^kg^{m-1}\notin I$.
\end{lemma}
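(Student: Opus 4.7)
The plan is to interpret ``$f,g$ generate $R$'' as ideal-theoretic generation, so that by the Nullstellensatz there exist $a,b\in R$ with $af+bg=1$. This is the interpretation made relevant by Remark~\ref{generate}; the stronger reading (generating $R$ as a $\mathbb{C}$-algebra) would actually make the lemma false, as the choice $R=\mathbb{C}[x,y]$, $f=x$, $g=y$, $I=(y^m,xy^{m-1})$ shows: one checks $g^m\in I$, $g^{m-1}\notin I$, yet $fg^{m-1}\in I$. So the first step is to unpack the hypothesis into the B\'ezout relation $af+bg=1$.

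Given that relation, the argument is essentially a one-line binomial expansion. I would argue by contradiction, assuming $f^kg^{m-1}\in I$ for some $k\geq 0$. The case $k=0$ contradicts $g^{m-1}\notin I$ directly, so take $k\geq 1$, raise $af+bg=1$ to the $k$-th power, multiply through by $g^{m-1}$, and expand:
\[
g^{m-1}\;=\;g^{m-1}(af+bg)^k\;=\;\sum_{i=0}^{k}\binom{k}{i}a^{i}b^{k-i}f^{i}\,g^{k-i+m-1}.
\]
I would then verify that every summand lies in $I$: for $i<k$ the exponent satisfies $k-i+m-1\geq m$, so $g^{k-i+m-1}\in I$ using $g^m\in I$; and for $i=k$ the summand $a^{k}b^{0}f^{k}g^{m-1}$ is in $I$ by the contradiction hypothesis. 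This forces $g^{m-1}\in I$, contradicting the standing assumption, and the lemma follows.

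The only real obstacle is the interpretive step of pinning down the correct meaning of ``generate''; once the B\'ezout identity $af+bg=1$ is in hand, no deeper commutative-algebra machinery is needed beyond the binomial theorem and a pigeonhole observation on exponents of $g$.
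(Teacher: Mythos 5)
Your proof is correct and uses the same central ingredients as the paper's --- the B\'ezout identity $af+bg=1$ and an argument by contradiction --- but the execution differs slightly. The paper takes $k$ \emph{minimal} with $f^kg^{m-1}\in I$, multiplies $f^{k-1}g^{m-1}$ by $(af+bg)$ once, and descends a single step to contradict minimality; you multiply $g^{m-1}$ by $(af+bg)^k$ and unwind the entire expansion in one binomial identity, checking that every term but the $i=k$ one is absorbed by $g^m\in I$. Both are equally elementary; the paper's version is a tidier one-line descent, yours trades the minimality device for a direct but more heavily indexed computation. Your interpretive preamble is the right call: ``generate $R$'' does mean ideal-theoretic generation, as the paper's own later uses confirm (e.g.\ $xy/\lambda$ and $xy/\lambda-1$, which differ by $1$, in the proof of Theorem~\ref{qhimpliessts}), and the counterexample $R=\mathbb{C}[x,y]$, $f=x$, $g=y$, $I=(y^m,xy^{m-1})$ correctly rules out the $\mathbb{C}$-algebra reading. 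One small inaccuracy worth flagging: you do not need the Nullstellensatz to extract $af+bg=1$. If $f,g$ generate $R$ as an ideal then $1\in(f,g)$ is the definition of the hypothesis, and the B\'ezout relation follows immediately with no geometric input; the Nullstellensatz would only be relevant if the hypothesis had been phrased as ``$f$ and $g$ have no common zeros'' and you needed to upgrade that to $(f,g)=R$.
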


\begin{proof}
Since $f,g$ generate $R$, there exist $\alpha,\beta$ so that $\alpha f+\beta g=1$. By way of contradiction, let $k$ be minimial so that $f^kg^{m-1}\in I$. Then 
$$\alpha f^kg^{m-1}+\beta f^{k-1} g^m=(\alpha f +\beta g)f^{k-1}g^{m-1}=f^{k-1}g^{m-1}.$$
But since $g^m\in I$, both sides of the equation must by in $I$, violating the minimality of $k$.
\end{proof}

The following lemma, which relies on Hilbert's Nullstellensatz, will assist us in showing that Condition~2 of Theorem~\ref{iff} is satisfied in all of its applications in later sections.

For an ideal $I\subseteq\mathbb{C}[x_{1},\ldots,x_{n}]$, we write $V(I)$ for
the variety along which the ideal vanishes. That is, 
\[
V(I)=\{\mathbf{a}\in\mathbb{C}^{n}|f(\mathbf{a})=0\;\forall f\in I\}.
\]

\begin{lemma}\label{lem:nullstellensatz}
Suppose $\hat p\in\sqrt{I'+J''}$ in $S$. Then for any $(a,b)\in V(I)$, $\hat p(a,b, x,y)\in \sqrt J$ in $R$, and for any $(c,d)\in V(J)$, $\hat p(x,y,c,d)\in \sqrt I$ in $R$.
\end{lemma}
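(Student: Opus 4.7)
The plan is to apply Hilbert's Nullstellensatz twice, once to unpack the hypothesis and once to repackage the conclusion. The key observation is that the ideals $I'$ and $J''$ in $S=\mathbb{C}[x_1,y_1,x_2,y_2]$ ``live'' on complementary pairs of variables: $I'$ is generated by the elements $\{f(x_1,y_1):f\in I\}$ and $J''$ by $\{g(x_2,y_2):g\in J\}$. Consequently, as subsets of $\mathbb{C}^4$,
\[
V(I')=V(I)\times\mathbb{C}^2,\qquad V(J'')=\mathbb{C}^2\times V(J),
\]
and therefore
\[
V(I'+J'')=V(I')\cap V(J'')=V(I)\times V(J).
\]

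Now I would invoke the Nullstellensatz (in the strong form $\sqrt{\mathcal{J}}=\mathbf{I}(V(\mathcal{J}))$) applied to the hypothesis $\hat p\in\sqrt{I'+J''}$ to conclude that $\hat p$ vanishes identically on $V(I)\times V(J)\subseteq\mathbb{C}^4$. Fixing any $(a,b)\in V(I)$, this tells us that the polynomial $\hat p(a,b,x,y)\in R=\mathbb{C}[x,y]$ obtained by specializing the first two variables vanishes on every point $(c,d)\in V(J)$, i.e.\ $\hat p(a,b,\cdot,\cdot)$ vanishes on $V(J)$. Applying the Nullstellensatz a second time — now to the ideal $J\subseteq R$ — gives
\[
\hat p(a,b,x,y)\in\mathbf{I}(V(J))=\sqrt{J},
\]
which is the first conclusion. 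A symmetric argument, fixing $(c,d)\in V(J)$ and specializing the last two variables instead, yields $\hat p(x,y,c,d)\in\sqrt{I}$.

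There is no serious obstacle here beyond notational bookkeeping: one only needs to be comfortable that $I'$ and $J''$ cut out product varieties in $\mathbb{C}^4$, and that specializing two coordinates of $\hat p\in S$ produces a polynomial in $R$ whose vanishing locus is exactly the corresponding slice of $V(\hat p)$. Everything else is an immediate double application of the strong Nullstellensatz.
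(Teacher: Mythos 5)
Your proof is correct and is essentially the same argument as in the paper: identify $V(I'+J'')$ as the product $V(I)\times V(J)$, observe that $\hat p\in\sqrt{I'+J''}$ therefore vanishes on that product, and then apply the Nullstellensatz to the specialized polynomial $\hat p(a,b,x,y)$ (resp.\ $\hat p(x,y,c,d)$) to conclude it lies in $\sqrt J$ (resp.\ $\sqrt I$). The only cosmetic difference is that you explicitly spell out $V(I')=V(I)\times\mathbb{C}^2$ and $V(J'')=\mathbb{C}^2\times V(J)$ before intersecting, whereas the paper states the description of $V(I'+J'')$ directly.
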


\begin{proof}
Notice that 
$$V(I^{\prime}+J^{\prime\prime})=\{(a,b,c,d)|(a,b)\in
V(I),\;(c,d)\in V(J)\}.$$
  Since $\hat{p}\in\sqrt{I'+J''}$, $\hat p$ vanishes at
every point of $V(I^{\prime}+J^{\prime\prime})$. 
So if
$(a,b)\in V(I)$, then $p(a,b,c,d)=0$ for all $(c,d)\in V(J)$, and thus by Hilbert's
Nullstellensatz, $p(a,b,x,y)\in\sqrt{J}$. Similarly, if $(c,d)\in V(J)$,
$p(x,y,c,d)\in\sqrt{I}$.
\end{proof}

\section{Tensor-splitting properties}\label{tensorsplittingsection}

In this section, we will apply the results of Section~\ref{algsection} to the case where $\delta:R\rightarrow R\otimes R$ is defined by
\begin{equation}\label{deltadef1}
\delta(x)=x\otimes x,\quad \delta(y)=y\otimes y.
\end{equation}
Applying the definitions of Section~\ref{tensorsection}, we see that $\epsilon(X)=X\otimes X$ and $\epsilon(Y)=Y\otimes Y$, and 
$$\kappa(\omega)=\omega(S_1\otimes S_2,T_1\otimes T_2).$$
In particular,
$$\kappa\Phi((xy-1)^n)=\beta_n(S_1\otimes S_2,T_1\otimes T_2)
.$$

We introduce the notation
\[
p_{\lambda,\mu}(x,y):=p(\lambda x,\mu y),\qquad p_{\lambda}(x,y):=p(x,\lambda
y)
\]
for $p(x,y)\in R$, $\lambda,\mu\in\mathbb{C}$. Notice that 
$$p_{a,b}(x,y)=\hat p(x,y,a,b)=\hat p(a,b,x,y)$$

Now proving Theorem~\ref{ninversethm} entails showing that, for $p(x,y)=xy-1$,
\begin{equation}\label{kappaform}
\Phi(p^{n})\in\ker\kappa\Longleftrightarrow\begin{array}{c}\Phi(p_\lambda^{l})\in\ker\kappa_1\text{
and }\Phi(p_{1/\lambda}^{m})\in\ker\kappa_2\\\text{ for some $l,m>0$ with }l+m=n+1,\;\lambda
\in\mathbb{C}^{\ast}\end{array}.
\end{equation}

It is natural to ask if (\ref{kappaform}), or some weaker version, is true for other polynomials $p\in R$. We therefore make the following definitions.

\begin{definition}
\label{ts} Suppose $p\in R$. If for every pair of $\mathbb{C}$-algebras
$A_{1}$, $A_{2}$, and every $S_{i},T_{i}\in A_{i}$ (not both zero), $i=1,2$
\begin{equation}
\Phi(p^{n})(S_{1}\otimes S_{2},T_{1}\otimes T_{2})=0\;\Longrightarrow\;%
\begin{array}
[c]{cc}%
\Phi(p_{a,b}^{l})(S_{1},T_{1})=0,\;\Phi(p_{c,d}^{m})(S_{2},T_{2})=0 & \\
\text{for some $a,b,c,d\in\mathbb{C}$, large enough $l,m$} &
\end{array}
\label{tsp}%
\end{equation}
we say $p$ has a \emph{weak tensor-splitting property}.

If additionally we can replace the phrase \textquotedblleft large enough $l,m
$\textquotedblright\ in (\ref{tsp}) with \textquotedblleft some positive $l,m$ with
$l+m=n+1$\textquotedblright\ whenever $S_{i}$ is left-invertible and $T_{i}$ is right-invertible, then we say $p$ has a \emph{strong tensor-splitting property}.
\end{definition}

\begin{definition}\label{tp}
Suppose $p\in R$. If for every pair of $\mathbb{C}$-algebras $A_{1}$, $A_{2}
$, and every $S_{i},T_{i}\in A_{i}$, $i=1,2$
\begin{equation}\label{tpp}%
\begin{array}
[c]{c}%
\Phi(p^{l})(S_{1},T_{1})=0,\;\Phi(p^{m})(S_{2},T_{2})=0\\
\end{array}
\Longrightarrow\;%
\begin{array}
[c]{c}%
\Phi(p^{n})(S_{1}\otimes S_{2},T_{1}\otimes T_{2})=0\\
\text{for some large enough }n
\end{array}%
\end{equation}
we say $p$ has a \emph{weak tensor-product property}.

If additionally we can replace the phrase \textquotedblleft large enough
$n$\textquotedblright\ in (\ref{tpp}) with \textquotedblleft
$n= l+m-1$\textquotedblright, then we say $p$ has a \emph{strong tensor-product property}.
\end{definition}

In what follows, we use Theorem~\ref{transthm} to characterize polynomials with the above properties by working in the commutative algebra setting. Note that applying the notation of Section~\ref{algsection} to our situation where $\delta$ is defined in (\ref{deltadef1}), 
$$\hat p(x_1,x_2,y_1,y_2)=p(x_1x_2,y_1y_2).$$

%

Our first main result is the following.

\begin{theorem}
\label{thm:every} Every $p(x,y)\in\mathbb{C}[x,y]$ satisfies a weak
tensor-splitting property.
\end{theorem}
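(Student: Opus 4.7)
The plan is to use the commutative-algebra machinery of Sections~\ref{algsection} and~\ref{tensorsection} to transport the hypothesis into the polynomial ring $S$, where Hilbert's Nullstellensatz (packaged in Lemma~\ref{lem:nullstellensatz}) yields the conclusion essentially for free.

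First I would unpack notation. Under the choice (\ref{deltadef1}) of $\delta$, one computes $\hat p=p(x_1x_2,y_1y_2)\in S$, so that $\hat p(a,b,x,y)=p(ax,by)=p_{a,b}(x,y)$. Because $\delta$ is a $\mathbb{C}$-algebra homomorphism and $\mu$ is an isomorphism, $\widehat{p^n}=\hat p^n$. Applying Theorem~\ref{transthm} to $p^n$ shows that the hypothesis $\Phi(p^n)(S_1\otimes S_2,T_1\otimes T_2)=0$ is equivalent to $\hat p^n\in I'+J''$ in $S$, and in particular implies $\hat p\in\sqrt{I'+J''}$.

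By Lemma~\ref{lem:nullstellensatz}, for any $(a,b)\in V(I)$ we have $p_{a,b}\in\sqrt J$, so $p_{a,b}^m\in J=\Phi^{-1}(\ker\kappa_2)$ for some $m\geq 1$, giving $\Phi(p_{a,b}^m)(S_2,T_2)=0$; symmetrically, for any $(c,d)\in V(J)$ there is some $l\geq 1$ with $\Phi(p_{c,d}^l)(S_1,T_1)=0$. Choosing any such points $(a,b)$ and $(c,d)$ then supplies the constants and exponents required by Definition~\ref{ts}.

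The one point that actually requires care — and the main (albeit small) obstacle — is the non-emptiness of $V(I)$ and $V(J)$. By the weak Nullstellensatz, $V(I)=\emptyset$ iff $1\in I$, iff $1_{A_1}=0_{A_1}$, iff $A_1$ is the zero algebra, which forces $S_1=T_1=0$; the analogous statement holds for $V(J)$. The ``not both zero'' clause of Definition~\ref{ts} is precisely designed to rule out these degenerate cases, so $V(I),V(J)\neq\emptyset$ and the proof is complete. In effect, once Theorem~\ref{transthm} and Lemma~\ref{lem:nullstellensatz} have been established, the theorem becomes a short and direct corollary of that framework.
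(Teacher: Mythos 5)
Your proof is correct and follows exactly the same route as the paper's: translate via Theorem~\ref{transthm} into the commutative ring $S$, note $V(I),V(J)\neq\emptyset$ by the weak Nullstellensatz and the ``not both zero'' hypothesis, and read off the conclusion from Lemma~\ref{lem:nullstellensatz}. The only difference is that you spell out the non-emptiness of $V(I)$ and $V(J)$ in slightly more detail, which the paper compresses into a single sentence.
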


\begin{proof}
Suppose $\Phi(p^n)\in\ker(\kappa)$. By Theorem~\ref{transthm}, this means $\hat p^n\in I'+J''$, where $I=\Phi^{-1}(\ker\kappa_1)$ and $J=\Phi^{-1}(\ker\kappa_2)$. Since $S_i,T_i$ are assumed not to both be zero, $I$ and $J$ are proper ideals of $R$, and so $V(I)$ and $V(J)$ are non-empty by the Weak Nullstellensatz. Applying Lemma~\ref{lem:nullstellensatz}, we see that there exist $a,b,c,d$ so that $p_{c,d}\in\sqrt I$ and $p_{a,b}\in\sqrt J$, and thus (again by Theorem~\ref{transthm}) $\Phi(p_{c,d}^l)\in\ker(\kappa_1)$ and $\Phi(p_{a,b}^m)\in\ker(\kappa_2)$ for large enough $l,m$.
\end{proof}

This theorem may seem surprisingly strong, but as we shall see in Corollary~\ref{cor:algebraic}, it turns out the assumption
that $\hat{p}\in \sqrt{I^{\prime}+J^{\prime\prime}}$ is quite restrictive for
most polynomials $p$.

The \emph{height} of a prime ideal $\mathfrak{p}$ is the length $n$ of a
maximal ascending chain of prime ideals $\mathfrak{p}_{0}\subsetneq
\mathfrak{p}_{1} \subsetneq\cdots\subsetneq\mathfrak{p}_{n}=\mathfrak{p}$. The
height of any ideal $I$ is the infimum of the heights of prime ideals
containing $I$. If $I\subseteq\mathbb{C}[x,y]$, then the height
of $I$ is also equal to the complex codimension of $V(I)$ in $\mathbb{C}^2$. The
only height-$0$ ideal is all of $R$, and height-$2$ ideals in $R$ are ones large
enough that their vanishing set is a finite (or empty) set of points.

If $A$ is any $\mathbb{C}$-algebra, we say $S\in A$ is \emph{algebraic} if $S
$ satisfies a polynomial relation; that is there exist scalars $c_{i}$ so that
$c_{n}S^{n}+\cdots+c_{1}S+c_{0}1=0$. In an operator algebra, the algebraic elements are called \emph{algebraic operators}. In this sense, algebraic operators behave like finite matrices.

\begin{proposition}\label{height1}
Let $A$ be any $\mathbb{C}$-algebra, and suppose $\kappa:F\rightarrow A$ is a $\mathbb{C}$-algebra homomorphism as before. Let $S=\kappa(X)$, $T=\kappa(Y)$. If $\Phi^{-1}(\ker\kappa)$ has height $2$,
then $S$ and $T$ are both algebraic.
\end{proposition}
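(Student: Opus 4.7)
The plan is to use the height-$2$ condition to produce univariate polynomials in $\Phi^{-1}(\ker\kappa)$ and then transfer them across $\Phi$ to get polynomial relations for $S$ and $T$. Set $I = \Phi^{-1}(\ker\kappa)$, which is an ideal in $R = \mathbb{C}[x,y]$ by Proposition~\ref{prop:phi}. Since $R$ has Krull dimension $2$, height $2$ forces $V(I)$ to be a finite (possibly empty) subset of $\mathbb{C}^2$.

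The trivial case $V(I)=\emptyset$ just means $I=R$ by the Weak Nullstellensatz, so $\Phi(1)=1\in\ker\kappa$, $A$ is the zero algebra, and $S=T=0$ are algebraic. Otherwise, let $\pi_x(V(I))=\{a_1,\dots,a_k\}$ and $\pi_y(V(I))=\{b_1,\dots,b_r\}$ be the projections of the finite set $V(I)$ onto the two coordinate axes. Define the univariate polynomials
\[
f(x)=\prod_{i=1}^k(x-a_i),\qquad g(y)=\prod_{j=1}^r(y-b_j).
\]
Both $f$ and $g$ vanish at every point of $V(I)$, so by Hilbert's Nullstellensatz $f,g\in\sqrt I$, and hence $f^N,g^N\in I$ for some $N\geq 1$.

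Now comes the key observation linking the commutative and non-commutative pictures: although $\Phi$ fails to be an algebra homomorphism in general, its restriction to the subalgebra $\mathbb{C}[x]\subset R$ \emph{is} a ring homomorphism (and similarly for $\mathbb{C}[y]$), because the monomial identity $\Phi(x^i)=X^i$ respects multiplication when only one variable is involved. Therefore $\Phi(f(x)^N) = f(X)^N$ and $\Phi(g(y)^N)=g(Y)^N$. Applying $\kappa$, we obtain
\[
f(S)^N=\kappa(f(X)^N)=\kappa\Phi(f(x)^N)=0,\qquad g(T)^N=0,
\]
since $f^N,g^N\in I=\Phi^{-1}(\ker\kappa)$. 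Thus $S$ satisfies the nonzero polynomial $f(t)^N$ and $T$ satisfies the nonzero polynomial $g(t)^N$, so both are algebraic.

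The only conceptual hurdle is the reminder that $\Phi$ is not multiplicative in general, which might make one worry about the step $\Phi(f(x)^N)=f(X)^N$. This is the crux, and the resolution is simply that $\Phi$ does respect multiplication of pure-$x$ polynomials (and pure-$y$ polynomials), which is exactly what the projection-and-Nullstellensatz argument produces. Everything else is a direct application of the height-dimension correspondence in $\mathbb{C}[x,y]$ together with the Nullstellensatz.
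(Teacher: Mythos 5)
Your proof is correct and follows essentially the same path as the paper: both use height $2$ to conclude $V(I)$ is finite, produce a univariate polynomial in $x$ and one in $y$ whose suitable powers lie in $I$ (via the Nullstellensatz), and then push through $\Phi$ and $\kappa$ to get polynomial relations for $S$ and $T$. Your explicit observation that $\Phi$ restricts to a ring homomorphism on $\mathbb{C}[x]$ (resp.\ $\mathbb{C}[y]$) is a nice way to make precise what the paper leaves implicit in the step $\Phi(\alpha)(S)=0$.
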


\begin{proof}
Suppose $I=\Phi^{-1}(\ker\kappa)$ has height 2 so that $V(I)$ is a finite collection of points $p_{i}=(a_{i}%
,b_{i})$, $i=1,\ldots,r$. Hence $\sqrt{{I}}$ is the product of the
maximal ideals $\mathfrak{m}_{p_{i}}=\langle x-a_{i},y-b_{i}\rangle$, so
${I}$ contains $\alpha(x,y)=(x-a_{1})^{k}\cdots(x-a_{r})^{k}$ and
$\beta(x,y)=(y-b_{1})^{k}\cdots(y-b_{r})^{k}$ for some $k$. But then
$\Phi(\alpha),\Phi(\beta)\in\ker(\kappa)$, so $\Phi(\alpha)(S)=0$ and
$\Phi(\beta)(T)=0$, and thus $S$ and $T$ are algebraic.
\end{proof}

Because of Proposition~\ref{height1}, the only ``interesting'' (i.e. not both algebraic) pairs $S,T$ in an algebra $A$ are ones for which $I$ has height 1. In what follows, we characterize polynomials $p\in R$ for which a height 1 ideal may arise in our context.

A polynomial $q(x_{1},\ldots,x_{n})\in\mathbb{C}[x_{1},\ldots,x_{n}]$ is
called \emph{quasi-homogeneous} if there exist coprime integers $w_{1}%
,\ldots,w_{n}$ called \emph{weights} and $d$ called the \emph{quasi-degree} so that
for any $\lambda\in\mathbb{C}^{\ast}$,
\[
q(\lambda^{w_{1}}x_{1},\ldots,\lambda^{w_{n}}x_{n})=\lambda^{d}q(x_{1}%
,\ldots,x_{n}).
\]
Equivalently, if we write
\[
q(x_{1},\ldots,x_{n})=\sum_{i_{1},\ldots,i_{n}}k_{i_{1},\ldots,i_{n}}%
x_{1}^{i_{1}}\cdots x_{n}^{i_{n}},
\]
then $k_{i_{1},\ldots,i_{n}}\neq0$ implies $w_{1}i_{1}+\cdots w_{n}i_{n}=d$.
Note that some authors restrict to the case where $w_{1},\ldots,w_{n},d$ are
all positive, but we do not make that part of the definition.

\begin{proposition}\label{tpequiv}
Let $p(x,y)\in R$ be irreducible. Then the following are equivalent:
\begin{enumerate}[label=(\alph*)]
\item\label{qh} $p$ is quasi-homogeneous.
\item\label{forms} $p(x,y)=A(x^\alpha y^\beta-B)$ or $p(x,y)=A(B x^\alpha - y^\beta)$ for some coprime positive integers $\alpha$ and $\beta$ and constants $A,B\in\mathbb{C}$.
\item\label{primes} $\hat p\in\langle p\rangle'+\langle p\rangle''$.
\item\label{ij} There exist height 1 ideals $I,J\subset R$ so that $\hat p\in \sqrt{I' + J''}$.
\item\label{str} $p$ has a strong tensor-product property.
\item\label{weak} $p$ has a weak tensor-product property.
\end{enumerate}
\end{proposition}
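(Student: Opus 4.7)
The plan is to prove the six-way equivalence via a cycle: $(a) \Leftrightarrow (b) \Rightarrow (c) \Rightarrow (d)$, together with $(c) \Rightarrow (e) \Rightarrow (f) \Rightarrow (d)$, closed by $(d) \Rightarrow (a)$. The structural equivalence $(a) \Leftrightarrow (b)$ I would establish by observing that an irreducible quasi-homogeneous $p$ with coprime weights $(w_1, w_2)$ has all its monomials $x^iy^j$ lying on the line $w_1 i + w_2 j = d$; since $\gcd(w_1, w_2)=1$, the integer solutions form an arithmetic progression, and irreducibility then forces $p$ to have essentially only two terms. A short case analysis on the signs of $w_1, w_2$ recovers the two forms in (b), with form 1 coming from opposite-sign weights (quasi-degree $0$) and form 2 from same-sign weights.

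For the easy forward steps: $(b) \Rightarrow (c)$ is a direct calculation (for form 1, substituting $u_i = x_i^\alpha y_i^\beta$, one sees that $\hat p/A = u_1u_2 - B$ decomposes as $u_1(u_2-B) + (u_1-B)$ after the natural normalization, and form 2 is analogous); $(c) \Rightarrow (d)$ is trivial by taking $I = J = \langle p \rangle$, which has height 1 since $p$ is irreducible. For $(c) \Rightarrow (e)$, I would use the decomposition $\hat p = f\, p(x_1, y_1) + g\, p(x_2, y_2)$ provided by (c) and expand $\hat p^{l+m-1}$ binomially: each term is a multiple of $p(x_1, y_1)^k p(x_2, y_2)^{l+m-1-k}$, and by pigeonhole either $k \geq l$ or $l+m-1-k \geq m$, placing every term in $I' + J''$ when $p^l \in I$ and $p^m \in J$. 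Theorem~\ref{transthm} then delivers the strong tensor-product property. $(e) \Rightarrow (f)$ is trivial. For $(f) \Rightarrow (d)$, I would specialize to $A_1 = A_2 = R/\langle p \rangle$ with $S_i = \bar x$, $T_i = \bar y$: since $\Phi(p)(\bar x, \bar y) = 0$, the weak tensor-product property yields $\Phi(p^n)(\bar x \otimes \bar x, \bar y \otimes \bar y) = 0$ for some $n$, which by Theorem~\ref{transthm} translates to $\hat p^n \in \langle p \rangle' + \langle p \rangle''$, i.e.\ $\hat p \in \sqrt{\langle p \rangle' + \langle p \rangle''}$, giving (d).

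The main technical obstacle will be $(d) \Rightarrow (a)$, which is where the algebraic geometry does real work. Given height 1 ideals $I, J$ with $\hat p \in \sqrt{I' + J''}$, Lemma~\ref{lem:nullstellensatz} shows that $p(ax, by)$ vanishes on $V(J)$ for every $(a, b) \in V(I)$. I would pick irreducible 1-dimensional components $C_1 \subseteq V(I)$ and $C_2 \subseteq V(J)$. For any $(a, b) \in C_1$ with $ab \neq 0$, the scaling map $\sigma_{a, b}(x, y) = (ax, by)$ is an automorphism of $\mathbb{C}^2$ sending $C_2$ into the irreducible curve $V(p)$; since $\sigma_{a, b}(C_2)$ is itself an irreducible 1-dimensional variety, it must coincide with $V(p)$. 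Fixing a basepoint $(a_0, b_0) \in C_1$ with $a_0 b_0 \neq 0$, this shows that $V(p)$ is invariant under $\sigma_{a/a_0,\, b/b_0}$ for every $(a, b) \in C_1$, i.e.\ under a 1-dimensional subset of the 2-torus $(\mathbb{C}^\ast)^2$. The stabilizer of $V(p)$ in $(\mathbb{C}^\ast)^2$ is then a positive-dimensional algebraic subgroup, so either the full torus (forcing $V(p)$ to be a coordinate axis) or a 1-parameter subtorus $\{(\lambda^r, \lambda^s): \lambda \in \mathbb{C}^\ast\}$; in the latter case, irreducibility forces $V(p)$ to be the closure of a single orbit of this subgroup, which is cut out by a quasi-homogeneous polynomial. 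Either way $p$ is quasi-homogeneous, establishing (a). The conceptual heart of this last step is that condition (d) covertly encodes a continuous family of scaling symmetries of $V(p)$, and such symmetries are precisely the hallmark of quasi-homogeneity.
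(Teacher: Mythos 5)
Your cycle of implications and your choice of key lemmas track the paper's closely, and the one genuinely different route is $(d)\Rightarrow(a)$. Where the paper shows that $p_{a,b}$ and $p_{a_0,b_0}$ each generate $\sqrt J$, hence are proportional, and then reads off a quasi-degree by directly comparing individual coefficients against powers of $\lambda=a/a_0$ and $\mu=b/b_0$ (choosing the modulus away from $1$), you repackage the same information as invariance of $V(p)$ under a $1$-dimensional family of torus scalings $\sigma_{a/a_0,\,b/b_0}$, and then invoke the classification of positive-dimensional algebraic subgroups of $(\mathbb{C}^\ast)^2$: the stabilizer is either the full torus (forcing $V(p)$ to be an axis) or contains a one-parameter subtorus $\{(\lambda^r,\lambda^s):\lambda\in\mathbb{C}^\ast\}$, whose orbit-closure through a point of $V(p)$ is cut out by a quasi-homogeneous polynomial. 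This is correct and conceptually attractive --- it explains structurally \emph{why} quasi-homogeneity is exactly the condition that appears --- at the cost of invoking the subgroup classification where the paper gets by with bare-hands coefficient bookkeeping. You should just be careful to dispose separately of the case where every point of $C_1$ or $C_2$ has a zero coordinate (a coordinate axis), exactly as the paper quarantines $p\in\{x,y\}$ at the outset.

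There is, however, a genuine gap in your $(b)\Rightarrow(c)$, and it is one the paper shares. Your identity $u_1u_2-B=u_1(u_2-B)+(u_1-B)$ expands to $u_1u_2+(1-B)u_1-B$, so it is valid only when $B=1$, and the phrase ``after the natural normalization'' does not repair it: with $\delta(x)=x\otimes x$, $\delta(y)=y\otimes y$, rescaling $y\mapsto y^\beta/B$ introduces a factor $B^2$ on the tensor side while $\hat p$ carries a single $B$, so the paper's own prescription ``substitute $x^\alpha$ for $x$ and $y^\beta/B$ for $y$'' breaks for the same reason. In fact conditions (c), (e), and (f) all fail for $B\notin\{0,1\}$ even though (a), (b), and (d) hold. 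Concretely, $p=xy-2$ is quasi-homogeneous with weights $(1,-1)$ and quasi-degree $0$, and satisfies (d) with $I=\langle xy-1\rangle$, $J=\langle xy-2\rangle$ (on $V(I)\times V(J)$ one has $x_1y_1=1$, $x_2y_2=2$, so $\hat p=x_1x_2y_1y_2-2=0$); but taking $A_1=A_2=\mathbb{C}$, $S_i=1$, $T_i=2$ gives $\Phi(p)(S_i,T_i)=0$ while $\Phi(p^n)(S_1\otimes S_2,T_1\otimes T_2)=(1\cdot 4-2)^n=2^n\neq 0$ for every $n$, so even the weak tensor-product property fails. This is an error in the stated proposition, not just in your writeup; it goes unnoticed downstream because the paper only ever applies the result with $p\in\{xy-1,\,x-y\}$, where $B=1$. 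Your decomposition, and hence the claimed six-way equivalence, needs the normalization $B\in\{0,1\}$ (equivalently: quasi-degree $0$ in form $1$, or $B=1$ in form $2$) as a standing hypothesis, and a correct statement for general quasi-homogeneous $p$ would have to replace $\langle p\rangle'+\langle p\rangle''$ in (c) by $\langle p_{a,b}\rangle'+\langle p_{c,d}\rangle''$ for suitable scalings.
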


\begin{proof}
That \ref{qh} is equivalent to \ref{forms} is a standard result, and we omit the proof.

If $\alpha=\beta=1$ and $A=B=1$, then the implication \ref{forms} implies \ref{primes} follows from
\begin{equation}\label{decomps}
\begin{array}{rcl}
x_{1}x_{2}y_{1}y_{2}-1&=&(x_{1}y_{1}-1)x_{2}y_{2}+(x_{2}y_{2}-1),\\
x_{1}x_{2}-y_{1}y_{2}&=&(x_{1}-y_{1})x_{2}+(x_{2}-y_{2})y_{1}.
\end{array}
\end{equation}
The general case follows from substituting $x^\alpha$ for $x$ and $y^\beta/B$ for $y$.

Condition \ref{primes} clearly implies \ref{ij}, and \ref{str} clearly implies \ref{weak}. By Lemma~\ref{weaktplem}, \ref{primes} implies \ref{str}.

If $p$ has a weak tensor-product property, then let $A_1=A_2=F/\Phi(p)$, with canonical maps $\kappa_i:F\rightarrow A_i$. Then $I=J=\langle p\rangle$ are height 1 ideals, and since $\Phi(p)\in\ker\kappa_i$ for $i=1,2$, the weak tensor-product property implies $\hat p\in\sqrt{ I'+J''}$. Thus \ref{weak} implies \ref{ij}.

Finally, we must prove \ref{ij} implies \ref{qh}. Suppose $I$ and $J$ are height-$1$ ideals in $R$ and $\hat{p}^{n}\in
I^{\prime}+J^{\prime\prime}$. First, note that if $p(x,y)=x$ or $p(x,y)=y$,
then $p$ is quasi-homogeneous. Otherwise, the vanishing set $V(J)$ contains
some $(c,d)$ with $c,d$ both nonzero. Then by Lemma~\ref{lem:nullstellensatz},
we have $p_{c,d}\in\sqrt{I}$. Since $I$ has height $1 $, $\sqrt{I}$ has height 1,
and $\langle p_{c,d}\rangle\subseteq\sqrt{I}$. But since $p_{c,d}$ is
irreducible, $\langle p_{c,d}\rangle$ is prime of height $1$, so $\sqrt
{I}=\langle p_{c,d}\rangle$, and $V(I)$ is the codimension $1$ set of $(a,b)$ so that
$p_{c,d}(a,b)=0$.

Now suppose $(a_{0},b_{0})\in V(I)$ with $a_{0},b_{0}$ nonzero, and let
\[
p(x,y)=\sum_{i,j} k_{ij}x^{i}y^{j}%
\]
(where all but finitely many $k_{ij}$ are zero). Let $(a,b)\in V(I)$ be such
that the constants $\lambda= a/a_{0}$ and $\mu=b/b_{0}$ are nonzero and have
modulus other than $1$.

Then since $p_{a,b}$ and $p_{a_{0},b_{0}}$ are both irreducible and each generate $\sqrt J$, the
polynomials must be constant multiples of one another. Hence, there is a
constant $M$ so that if $k_{i,j}\neq0$, $Ma_{0}^{i}b_{0}^{j}=a^{i}b^{j}$; that
is, $M=\lambda^{i}\mu^{j}$. Therefore, if $k_{i_{1}j_{1}}$ and $k_{i_{2}j_{2}%
}$ are both nonzero, $\lambda^{i_{1}}\mu^{j_{1}}=\lambda^{i_{2}}\mu^{j_{2}}$,
so $\lambda^{i_{1}-i_{2}}=\mu^{j_{2}-j_{1}}$. Let $\alpha,\beta$ be coprime
integers so that $\alpha(i_{1}-i_{2})=\beta(j_{2}-j_{1})$. Then by the Chinese
Remainder Theorem, there exists $\eta$ so that $\eta^{\alpha}=\lambda$,
$\eta^{\beta}=\mu$. Letting $d=\alpha i_{1}+\beta j_{1}$, we have $M=\eta^{d}%
$, and so for any $i,j$ with $k_{i,j}\neq0$, $\eta^{\alpha i+\beta j}=\eta
^{d}$; since $|\eta|\neq1$, $d=\alpha i+\beta j$. Therefore $p_{a_{0},b_{0}}$
and, as a corollary, $p$ are quasi-homogeneous with weights $\alpha$ and
$\beta$ and quasi-degree $d$.
\end{proof}

\begin{corollary}
\label{cor:algebraic}If $p$ is irreducible and not quasi-homogeneous, and
\begin{equation}\label{toostrong}
\Phi(p^{n})(S_{1}\otimes S_{2},T_{1}\otimes T_{2})=0,
\end{equation}
then either $S_{1}$
and $T_{1}$ are both algebraic, or $S_{2}$ and $T_{2}$ are both algebraic.
\end{corollary}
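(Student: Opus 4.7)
The plan is to recast the hypothesis as a membership statement in a commutative polynomial ring via Theorem~\ref{transthm}, and then use Proposition~\ref{tpequiv} to force $p$ to be quasi-homogeneous unless one of the desired algebraicity conclusions holds. Concretely, I will let $\kappa_i:F\to A_i$ be the $\mathbb{C}$-algebra homomorphisms sending $X\mapsto S_i$, $Y\mapsto T_i$, set $I=\Phi^{-1}(\ker\kappa_1)$ and $J=\Phi^{-1}(\ker\kappa_2)$ (these are ideals of $R$ by Proposition~\ref{prop:phi}), and apply Theorem~\ref{transthm} to rewrite (\ref{toostrong}) as $\hat p^n\in I'+J''$, whence $\hat p\in\sqrt{I'+J''}$.

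Next, I will argue the contrapositive. Suppose neither $(S_1,T_1)$ nor $(S_2,T_2)$ is a pair of algebraic elements. I may assume both $I$ and $J$ are proper, since if, say, $I=R$, then $A_1$ is the zero ring and $S_1=T_1=0$ are trivially algebraic. Proposition~\ref{height1} asserts that $I$ of height~$2$ implies $S_1,T_1$ both algebraic, and the converse is immediate: if $f(S_1)=0$ and $g(T_1)=0$ for nonzero univariate polynomials $f,g$, then $f(x),g(y)\in I$, forcing $V(I)$ to be finite. So under the contrapositive assumption, both $I$ and $J$ have height at most~$1$.

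The main step will then be to conclude via Proposition~\ref{tpequiv} that $p$ must be quasi-homogeneous, contradicting the hypothesis. The subtlety I expect to be the main obstacle is that Proposition~\ref{tpequiv} is stated for height-$1$ ideals, while $I$ or $J$ might have height~$0$, i.e.\ equal $(0)$. I plan to overcome this by enlargement: I will choose any height-$1$ prime $\mathfrak{p}\subseteq R$ containing $I$ (a minimal prime of $I$ of height~$1$ if $I\neq(0)$, and any irreducible height-$1$ prime such as $(x)$ if $I=(0)$), and similarly $\mathfrak{q}\supseteq J$. Since $I'\subseteq\mathfrak{p}'$ and $J''\subseteq\mathfrak{q}''$, we have
\[
\hat p\in\sqrt{I'+J''}\subseteq\sqrt{\mathfrak{p}'+\mathfrak{q}''}.
\]
Condition~\ref{ij} of Proposition~\ref{tpequiv} then holds for the height-$1$ ideals $\mathfrak{p},\mathfrak{q}$, so the implication \ref{ij}$\Rightarrow$\ref{qh} of that proposition forces $p$ to be quasi-homogeneous---the required contradiction.
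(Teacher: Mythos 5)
Your proof is correct and follows the route the paper intends (the paper offers the corollary without proof, leaving the reader to combine Theorem~\ref{transthm}, Proposition~\ref{height1}, and Proposition~\ref{tpequiv}). Your explicit treatment of the height-$0$ case by enlarging $I$ and $J$ to height-$1$ primes $\mathfrak p\supseteq I$, $\mathfrak q\supseteq J$ and using $I'+J''\subseteq\mathfrak p'+\mathfrak q''$ is exactly the minor detail needed to make the appeal to condition~\ref{ij} of Proposition~\ref{tpequiv} rigorous, since that condition is stated for height exactly $1$; the paper glosses over this point.
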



The authors view Corolloary~\ref{cor:algebraic} as an indication that unless $p$ is quasi-homogeneous, then the condition (\ref{toostrong}) is in some sense too strong to be useful. In particular, if $p$ is not quasi-homogeneous, (\ref{toostrong}) implies that one of the two pairs of operators $S_i,T_i$ was uninteresting in the sense that both operators were algebraic. This also sheds new light on Theorem~\ref{thm:every}; the weak tensor-splitting property of $p$ may simply arise as a bi-product of the implied algebraic-ness of the original operators. For this reason, we consider the tensor-splitting property to really only be meaningful in the case where $p$ is quasi-homogeneous.

\begin{theorem}\label{qhimpliessts}
Any irreducible quasi-homogeneous polynomial in two variables has a strong tensor-splitting property.

Specifically, suppose $p(x,y)=A(x^\alpha y^\beta-B)$ or $p(x,y)=A(Bx^\alpha-y^\beta)$ and let $\eta$ be such that $\eta^\beta=B$; then if
$$\Phi(p^n)(S_1\otimes S_2,T_1\otimes T_2)=0$$
and $S_i$ is left-invertible and $T_i$ is right-invertible for $i=1,2$, then there exists $\lambda\in\mathbb{C}^*$ and $l,m\geq0$ with $l+m\leq n+1$ so that
$$\Phi(p_\lambda^l)(S_1,T_1)=0,\qquad \Phi(p^m_{\eta/\lambda})(S_2,T_2)=0.$$
\end{theorem}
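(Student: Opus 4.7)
The plan is to apply Theorem~\ref{iff} with $q_1=p_\lambda$ and $q_2=p_{\eta/\lambda}$ for a carefully chosen $\lambda\in\mathbb{C}^\ast$; the work consists in selecting $\lambda$ and verifying the three hypotheses of that theorem. First I would translate to the commutative setting: by Proposition~\ref{prop:phi}, $I=\Phi^{-1}(\ker\kappa_1)$ and $J=\Phi^{-1}(\ker\kappa_2)$ are ideals of $R$, and by Theorem~\ref{transthm} the hypothesis $\Phi(p^n)(S_1\otimes S_2,T_1\otimes T_2)=0$ becomes $\hat p^n\in I'+J''$, hence $\hat p\in\sqrt{I'+J''}$. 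Left- and right-invertibility imply that $I,J$ are proper, so $V(I)$ and $V(J)$ are non-empty by the Weak Nullstellensatz.

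The heart of the argument is the selection of $\lambda$. I focus on the first form $p=A(x^\alpha y^\beta-B)$ (with $A,B\ne 0$ forced by irreducibility); the second form runs in complete parallel. Since $V(I'+J'')=V(I)\times V(J)$ and $\hat p$ vanishes there by Hilbert's Nullstellensatz, every $(a,b)\in V(I)$ and $(c,d)\in V(J)$ satisfy $(ac)^\alpha(bd)^\beta=B$. As $B\ne 0$, no point of $V(I)$ or $V(J)$ has a zero coordinate. Fixing $(c_0,d_0)\in V(J)$, this forces $a^\alpha b^\beta=B/(c_0^\alpha d_0^\beta)=:\mu_I$ uniformly for $(a,b)\in V(I)$; symmetrically, $c^\alpha d^\beta=\mu_J$ is constant on $V(J)$ with $\mu_I\mu_J=B$. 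Choosing $\lambda\in\mathbb{C}^\ast$ with $\lambda^\beta=\mu_J$, so that $(\eta/\lambda)^\beta=B/\mu_J=\mu_I$, direct evaluation shows $p_\lambda$ vanishes on $V(I)$ and $p_{\eta/\lambda}$ vanishes on $V(J)$. Hilbert's Nullstellensatz then yields $p_\lambda\in\sqrt I$ and $p_{\eta/\lambda}\in\sqrt J$, which is condition~2 of Theorem~\ref{iff}.

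For condition~1, I would exhibit the explicit decomposition
\[
\hat p \;=\; \frac{x_2^\alpha y_2^\beta}{\lambda^\beta}\,p_\lambda(x_1,y_1) \;+\; 1\cdot p_{\eta/\lambda}(x_2,y_2),
\]
which gives $f=x_2^\alpha y_2^\beta/\lambda^\beta$ and $g=1$; the second form admits the analogous decomposition $\hat p = x_2^\alpha\, p_\lambda(x_1,y_1)+(\lambda^\beta y_1^\beta/B)\, p_{\eta/\lambda}(x_2,y_2)$. For condition~3, a short calculation using the action of $\Phi$ on monomials shows $\Phi(f^iq_2^j)(S_2,T_2)$ is a scalar multiple of $S_2^{\alpha i}\,\Phi(q_2^j)(S_2,T_2)\,T_2^{\beta i}$, and similarly for $g^iq_1^j$ acting on $(S_1,T_1)$. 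Left-invertibility of each $S_i$ (to cancel $S_i$-powers from the left) and right-invertibility of each $T_i$ (to cancel $T_i$-powers from the right) then deliver condition~3. Theorem~\ref{iff} produces the required $l,m\ge 1$ with $l+m\le n+1$.

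The main obstacle I anticipate is the constancy of $a^\alpha b^\beta$ on $V(I)$ and $c^\alpha d^\beta$ on $V(J)$: this collapses the continuum of constraints produced by $\hat p$ into a single equation that admits a single scalar solution $\lambda$. For a general polynomial, different points of $V(J)$ would yield inequivalent constraints on $V(I)$, and no uniform $\lambda$ could exist; the rigid product form $\hat p=A(x_1^\alpha y_1^\beta\cdot x_2^\alpha y_2^\beta-B)$ characteristic of the quasi-homogeneous case is precisely what enforces the constancy, and hence what makes the strong tensor-splitting property available.
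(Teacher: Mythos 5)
Your proposal is correct, and it rests on the same machinery the paper uses: Theorem~\ref{iff} with $q_1=p_\lambda$, $q_2=p_{\eta/\lambda}$, Lemma~\ref{lem:nullstellensatz} to certify condition~2, and an explicit decomposition of $\hat p$ for condition~1. The organizational difference is that the paper first reduces to the two prototype cases $p=xy-1$ and $p=x-y$ via the substitution $x\mapsto x^\alpha$, $y\mapsto y^\beta/B$, and then verifies condition~3 by two different routes (Remark~\ref{generate} and Lemma~\ref{minimality} for $xy-1$; a direct one-sided-invertibility argument for $x-y$), whereas you carry the general quasi-homogeneous form $A(x^\alpha y^\beta - B)$ (resp.\ $A(Bx^\alpha - y^\beta)$) through from the start, make the choice of $\lambda$ completely explicit via $\lambda^\beta=\mu_J$, and verify condition~3 uniformly with the invertibility argument. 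This buys you a cleaner, more self-contained argument and a clearer view of \emph{why} quasi-homogeneity matters --- your observation that $a^\alpha b^\beta$ is forced to be constant on $V(I)$ (and similarly on $V(J)$) is exactly the mechanism the paper leaves implicit --- at the cost of slightly heavier bookkeeping and of not recovering Remark~\ref{unnecessary} (that for $xy-1$ the invertibility hypotheses are not actually needed), which is harmless here since the theorem assumes them anyway.
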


\begin{proof}
We will restrict to the cases that $p(x,y)=x-y$ and $p(x,y)=xy-1$. To go to the general cases, substitute $x^\alpha$ and $y^\beta/B$ for $x$ and $y$ respectively.

Suppose $\Phi(p^n)\in\ker(\kappa)$. Since we can assume that $S_i,T_i$ both have one-sided inverses in $A_i$ for $i=1,2$, then $I=\Phi^{-1}(\ker\kappa_1)$ and $J=\Phi^{-1}(\ker\kappa_2)$ are both proper ideals, and there exist points $(a,b)\in V(I)$, $(c,d)\in V(J)$ so that $a,b,c,d$ are all nonzero.


We will now prove that $p,p_\lambda,p_{1/\lambda}$ satisfy the conditions of Theorem~\ref{iff}, whose conclusion is exactly what we are trying to prove. The following variation on (\ref{decomps}) shows Condition 1, that $\hat p\in\langle p_\lambda,p_{1/\lambda}\rangle$, is satisfied.
\begin{equation*}
\begin{array}{rcl}
x_{1}x_{2}y_{1}y_{2}-1&=&(x_{2}y_{2}/\lambda)(\lambda x_{1}y_{1}-1)+1(x_{2}y_{2}/\lambda-1),\\
x_{1}x_{2}-y_{1}y_{2}&=&x_{2}(x_{1}-\lambda y_{1})+\lambda y_1(x_{2}-y_{2}/\lambda).
\end{array}
\end{equation*}
We also just showed Condition 2, that $p_\lambda^L\in I$ and $p_{1/\lambda}^M\in J$, is satisfied as well.

For Condition 3, we treat the two cases of $xy-1$ and $x-y$ separately. If $p(x,y)=xy-1$, then  since $xy/\lambda, xy/\lambda-1$ generate $R$, as do $1,\lambda xy-1$, we may apply Remark~\ref{minimality}.

If $p(x,y)=x-y$, we prove Condition 3 directly. Suppose $(\lambda y)^i(x-\lambda y)^j\in I$. Then \begin{align*}
0&=\kappa_2\Phi((\lambda y)^i(x-\lambda y)^j)\\
&=\lambda^i\kappa_2(\Phi((x-\lambda y)^j)Y^i)\\
&=\lambda^i\kappa_2(\Phi((x-\lambda y)^j))T_2^i
\end{align*}
Since $T_2$ is right-invertible, this implies that $\Phi((x-\lambda y)^j)\in\ker\kappa_2$, so $(x-\lambda y)^j\in I$. By a similar argument, if $x^i(x-y/\lambda)^j\in J$, then $(x-y/\lambda)^j\in J$.
\end{proof}

\begin{remark}\label{unnecessary}
Note that as long as $A_1,A_2$ are nonzero, we do not need to include the invertibility of $S_i,T_i$ as a separate condition for the case of $p(x,y)=xy-1$. Indeed, the only time this condition is invoked in the proof is to show that $a,b,c,d$ are nonzero, but this is implied by the fact that $(abxy-1)^L\in I$ and $(cdxy-1)^M\in J$ and that $I,J$ are proper ideals.
\end{remark}

\section{Operator Theoretic Results}\label{opsection}

%
%

We now briefly explain the operator theoretic consequences of the results of the previous section.

\begin{lemma}\label{complemma}
For Banach algebras $B_1,B_2$, we let
$$\iota:B_1\otimes B_2\rightarrow B_1\overline\otimes B_2$$
be the inclusion of the algebraic tensor product into its completion with respect to some reasonable uniform cross norm. Then for any $\omega\in F$ and $\alpha,\beta\in B_1\otimes B_2$, we have $\omega(\alpha,\beta)=0$ if and only $\iota(\omega(\alpha,\beta))=0$.
\end{lemma}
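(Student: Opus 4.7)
The plan is essentially to observe that the lemma is just the statement that $\iota$ is an injective algebra homomorphism, so that any polynomial identity in $B_1\otimes B_2$ holds if and only if the ``same'' identity holds in $B_1\overline\otimes B_2$.

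First I would unpack the definitions: a reasonable uniform cross norm on $B_1\otimes B_2$ is a submultiplicative norm satisfying $\|a\otimes b\|=\|a\|\|b\|$, and $B_1\overline\otimes B_2$ is by construction the completion of $B_1\otimes B_2$ with respect to this norm. Thus $\iota$ is the canonical inclusion of a normed space into its completion, and is therefore isometric, in particular injective. Submultiplicativity guarantees that the multiplication on $B_1\otimes B_2$ extends continuously (and uniquely) to $B_1\overline\otimes B_2$, so $\iota$ is moreover a $\mathbb{C}$-algebra homomorphism.

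Next, because $\omega(\alpha,\beta)$ is built from $\alpha$ and $\beta$ using only addition, scalar multiplication, and the algebra multiplication, and because $\iota$ preserves all of these operations, the element $\omega(\alpha,\beta)\in B_1\otimes B_2$ and the element $\omega(\iota(\alpha),\iota(\beta))\in B_1\overline\otimes B_2$ are related by $\iota(\omega(\alpha,\beta))=\omega(\iota(\alpha),\iota(\beta))$. Consequently, $\omega(\alpha,\beta)=0$ in $B_1\otimes B_2$ certainly implies $\iota(\omega(\alpha,\beta))=0$. The reverse implication is precisely injectivity of $\iota$.

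There is really no significant obstacle: the lemma is a bookkeeping step whose purpose is to connect the purely algebraic framework developed in Sections~\ref{algsection}--\ref{tensorsplittingsection} with the Banach-space tensor product appearing in statements like Theorem~\ref{ninversethm}. Once we have the lemma, a polynomial relation $\Phi(p^n)(S_1\otimes S_2,T_1\otimes T_2)=0$ in the Banach completion $B(X)\overline\otimes B(Y)$ (or its image in $B(X\overline\otimes Y)$) can be verified or refuted within the algebraic tensor product $B(X)\otimes B(Y)$, where all of the machinery built up around the commutative diagram~(\ref{commdiageq}) applies verbatim.
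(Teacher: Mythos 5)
Your proof is correct and takes essentially the same approach as the paper, whose entire argument is the observation that the statement follows immediately from injectivity of $\iota$; you simply unpack why $\iota$ is injective and why this suffices.
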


\begin{proof}
Follows immediately from the injectivity of $\iota$.
\end{proof}

The above lemma may be somewhat trivial, but for us it means that results from earlier sections regarding the algebraic tensor product of $\mathbb{C}$-algebras apply equally well to the (completed) tensor product of Banach algebras.


\begin{proof}[Proof of Theorem \ref{ninversethm}]
Note that for Banach spaces
$X$ and $Y,$ $B(X)$ and $B(Y)$ are nonzero $\mathbb{C}$-algebras with identities. The Banach algebra $B(X\overline\otimes Y)$ is isomorphic to a completion of $B(X)\otimes B(Y)$ with respect to a reasonable uniform cross norm. 

Thus, if we apply Theorem~\ref{qhimpliessts} with $A_1=B(X)$ and $A_2=B(Y)$,
the algebraic assumption that
\[
\Phi((xy-1)^{n})(S_{1}\otimes S_{2},T_{1}\otimes T_{2})=0,
\]
in $B(X)\otimes B(Y)$ is equivalent to the same condition in $B(X\overline\otimes Y)$ by Lemma~\ref{complemma}, and so the same conclusion holds.
%
%
%
%

We should also remark that the statement of Theorem~\ref{ninversethm} also holds when the parenthetical strictness condition is added to both \ref{tensor} and \ref{individuals}. This follows from the original statement of Theorem~\ref{ninversethm} along with the recursive condition in (\ref{recursive}).
\end{proof}

We now move on to some applications of the theory to $p(x,y)=x-y$. Recall from the introduction that Helton classes were defined for operators on a Hilbert space, but the definition extends easily to arbitrary $\mathbb{C}$-algebras. In particular if $S,T$ are elements of an arbitrary $\mathbb{C}$-algebra $A$, we say $T\in\mathrm{Helton}_n(S)$ if
$$\Phi((x-y)^n)(S,T)=0.$$

\begin{theorem}\label{heltonthm}
Assume $S_{1},T_{1}\in B(X)$ and $S_{2},T_{2}\in B(Y)$, and that $S_1,S_2$ are left-invertible, and $T_1,T_2$ are right-invertible.Then the following are equivalent.
\begin{enumerate}[label=(\alph*)]
\item The tensor product $T_{1}\otimes T_{2}$ on $X\overline{\otimes
}Y$ belongs to $\mathrm{Helton}_{n}(S_{1}\otimes S_{2})$.
\item There exist $m$
and $l$ such that $m+l=n+1$ and $\lambda\in\mathbb{C}^*$ so that $\lambda T_{1}\in \mathrm{Helton}_{l}(S_{1})$ and
$(1/\lambda)T_{2}$ $\in \mathrm{Helton}_{m}(S_{2}).$
\end{enumerate}
\end{theorem}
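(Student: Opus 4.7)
The plan is to recognize Theorem~\ref{heltonthm} as a direct application of Theorem~\ref{qhimpliessts} and Proposition~\ref{tpequiv} to the polynomial $p(x,y)=x-y$, paralleling the operator-theoretic deduction of Theorem~\ref{ninversethm} from the same algebraic machinery. First I would note that $p(x,y)=x-y$ is irreducible and fits the form $A(Bx^{\alpha}-y^{\beta})$ appearing in Proposition~\ref{tpequiv} with $A=B=1$ and $\alpha=\beta=1$, hence is quasi-homogeneous. The scalar $\eta$ with $\eta^{\beta}=B$ may be taken to be $\eta=1$, so the substitutions in the conclusion of Theorem~\ref{qhimpliessts} become $p_{\lambda}(x,y)=x-\lambda y$ and $p_{1/\lambda}(x,y)=x-y/\lambda$. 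A routine expansion shows $\Phi(p_{\lambda}^{k})(S,T)=\gamma_{k}(S,\lambda T)$, so the condition $\Phi(p_{\lambda}^{l})(S_{1},T_{1})=0$ is precisely $\lambda T_{1}\in\mathrm{Helton}_{l}(S_{1})$.

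For the forward direction (a)$\Rightarrow$(b), I would set $A_{1}=B(X)$ and $A_{2}=B(Y)$, which are nonzero unital $\mathbb{C}$-algebras. The hypothesis $T_{1}\otimes T_{2}\in\mathrm{Helton}_{n}(S_{1}\otimes S_{2})$ in $B(X\overline{\otimes}Y)$ is by definition $\Phi(p^{n})(S_{1}\otimes S_{2},T_{1}\otimes T_{2})=0$, and Lemma~\ref{complemma} transfers this equation to the algebraic tensor product $B(X)\otimes B(Y)$. The invertibility assumptions on $S_{i}$ and $T_{i}$ are exactly those demanded by Theorem~\ref{qhimpliessts}, which therefore produces $\lambda\in\mathbb{C}^{\ast}$ and $l,m\geq 0$ with $l+m\leq n+1$ so that $\lambda T_{1}\in\mathrm{Helton}_{l}(S_{1})$ and $(1/\lambda)T_{2}\in\mathrm{Helton}_{m}(S_{2})$. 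A short calculation establishes the recursion $\gamma_{n}(S,T)=S\gamma_{n-1}(S,T)-\gamma_{n-1}(S,T)T$ analogous to (\ref{recursive}), so Helton-class membership is monotonic in the index, and I may enlarge $l$ or $m$ to arrange $l+m=n+1$ exactly (the edge case $l=0$ or $m=0$ would force $1=0$ in the corresponding nonzero algebra, so both are at least $1$ automatically).

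For the converse (b)$\Rightarrow$(a), I would invoke the strong tensor-product property of $p$ guaranteed by Proposition~\ref{tpequiv}. Applied to $\gamma_{l}(S_{1},\lambda T_{1})=0$ and $\gamma_{m}(S_{2},T_{2}/\lambda)=0$, it produces
\[
\gamma_{l+m-1}(S_{1}\otimes S_{2},\,(\lambda T_{1})\otimes(T_{2}/\lambda))=0.
\]
Bilinearity of the tensor product gives $(\lambda T_{1})\otimes(T_{2}/\lambda)=T_{1}\otimes T_{2}$, so this is exactly $T_{1}\otimes T_{2}\in\mathrm{Helton}_{n}(S_{1}\otimes S_{2})$, and Lemma~\ref{complemma} transfers the conclusion to the completion.

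The only real obstacle is bookkeeping: carefully tracking how $\lambda$ absorbs across the tensor product and translating between the polynomial formalism of Sections~\ref{algsection}--\ref{tensorsplittingsection} and the operator-level formulation. All the substantive work---the Nullstellensatz-based forward direction via Lemma~\ref{lem:nullstellensatz} and the converse via the decompositions in (\ref{decomps})---was already done, so Theorem~\ref{heltonthm} is genuinely just a specialization of the general framework, completely analogous to the derivation of Theorem~\ref{ninversethm}.
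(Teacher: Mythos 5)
Your proposal is correct and follows essentially the same route as the paper: the paper's entire proof of this theorem is the single sentence ``Using the same logic as in the proof of Theorem~\ref{ninversethm}, we can apply Theorem~\ref{qhimpliessts} and Lemma~\ref{complemma} with $p(x,y)=x-y$,'' and you have unpacked exactly that specialization (quasi-homogeneous form $A(Bx^\alpha-y^\beta)$ with $A=B=\alpha=\beta=1$, hence $\eta=1$; translation $\Phi(p_\lambda^k)(S,T)=\gamma_k(S,\lambda T)$; forward direction from Theorem~\ref{qhimpliessts}, converse from the strong tensor-product property in Proposition~\ref{tpequiv}, and the transfer across the completion via Lemma~\ref{complemma}). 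The bookkeeping you add---the recursion $\gamma_n(S,T)=S\gamma_{n-1}(S,T)-\gamma_{n-1}(S,T)T$ to adjust $l+m$ up to $n+1$ and the observation that $l=0$ or $m=0$ would force $1=0$---is sound and simply makes explicit what the paper leaves implicit.
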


\begin{proof}
Using the same logic as in the proof of Theorem~\ref{ninversethm}, we can apply Theorem~\ref{qhimpliessts} and Lemma~\ref{complemma} with $p(x,y)=x-y$.
\end{proof}

In preparation to prove Theorem~\ref{nsym}, we recall the definition of an $n$-symmetry from the introduction that if $H$ is a Hilbert space, we say $T\in B(H)$ is an $n$-symmetry if
$$\gamma_n(S,T)=\Phi((x-y)^n)(T^*,T)=0.$$

In what follows, we write $\sigma(T)$ and $\sigma_{ap}(T)$ for the spectrum and approximate point spectrum of $T$ respectively.

\begin{lemma}\label{lambda1}
If $T$ is not nilpotent and $\lambda T\in\mathrm{Helton}_n(T^*)$, then $|\lambda|=1$.
\end{lemma}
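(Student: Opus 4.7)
The plan is to produce a nonzero approximate eigenvalue $\mu \in \sigma_{ap}(T)$ and then extract the constraint $\lambda\mu = \overline{\mu}$ by testing the Helton identity against a corresponding sequence of approximate eigenvectors.

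First, I would argue that $\sigma_{ap}(T)$ contains a nonzero point. When $T$ is not quasinilpotent this is immediate, since the boundary of $\sigma(T)$ is always contained in $\sigma_{ap}(T)$ and must then meet $\mathbb{C}^{\ast}$. The subtler case is a non-nilpotent quasinilpotent $T$ (e.g.\ a Volterra-type operator), where $\sigma_{ap}(T) = \{0\}$; here one would need a separate argument showing that the Helton relation $\gamma_n(T^*, \lambda T) = 0$ together with $\sigma(T) = \{0\}$ actually forces $T$ to be nilpotent, contradicting the hypothesis. I expect this step to be the main obstacle in the proof, and it is the only place where the non-nilpotency assumption is actually used.

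Assuming such a nonzero $\mu \in \sigma_{ap}(T)$, choose unit vectors $x_j$ with $(T-\mu)x_j \to 0$. An induction on $m$, using the boundedness of $T$ together with $T^{m+1}x_j - \mu^{m+1}x_j = T(T^m x_j - \mu^m x_j) + \mu^m(Tx_j - \mu x_j)$, gives $T^m x_j = \mu^m x_j + o(1)$ in norm for every $m \geq 0$. Consequently
\[
\langle T^{n-k} x_j,\, T^k x_j \rangle \longrightarrow \mu^{n-k}\,\overline{\mu}^{\,k} \qquad (0 \le k \le n).
\]

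Expanding $0 = \langle \gamma_n(T^*,\lambda T)x_j, x_j \rangle$ according to the definition of $\gamma_n$ and passing to the limit gives
\[
0 \;=\; \sum_{k=0}^{n} (-1)^{n-k}\binom{n}{k}\lambda^{n-k}\,\mu^{n-k}\,\overline{\mu}^{\,k} \;=\; (-1)^n (\lambda\mu - \overline{\mu})^n,
\]
by the binomial theorem. Hence $\lambda\mu = \overline{\mu}$, and since $\mu \neq 0$ we conclude $|\lambda| = |\overline{\mu}/\mu| = 1$. Once the spectral step is established, the remainder is a routine approximate-eigenvector calculation.
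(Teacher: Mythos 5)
Your argument takes essentially the same route as the paper's: pick a nonzero $\mu\in\sigma_{ap}(T)$, test $\gamma_n(T^\ast,\lambda T)$ against an approximate eigenvector sequence, and obtain $(\lambda\mu-\overline{\mu})^n=0$, hence $|\lambda|=1$. The approximate-eigenvector computation you give is correct and matches the paper's calculation term for term.

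The only substantive issue is the one you yourself flag: producing a nonzero $\mu\in\sigma_{ap}(T)$. This is a genuine gap, and it is worth noting that it is also a gap in the paper's own proof. The paper takes $\rho\in\sigma_{ap}(T)$ of modulus equal to the spectral radius and asserts that ``by the non-nilpotency condition, $\rho\neq0$,'' but this inference fails for a quasinilpotent non-nilpotent $T$ (the Volterra operator being the standard example), where the spectral radius is $0$ and $\sigma_{ap}(T)=\{0\}$. So the obstacle you anticipated is not actually addressed by the published proof either. A clean fix exists for the way the lemma is used in this paper: Lemma~\ref{lambda1} is invoked only inside the proof of Theorem~\ref{nsym}, where $T$ is assumed left invertible, and left invertibility gives $0\notin\sigma_{ap}(T)$, hence $\sigma(T)\neq\{0\}$ and a positive spectral radius. (One should also dispose of $\lambda=0$, which is immediate since $\gamma_n(T^\ast,0)=(T^\ast)^n$, whose vanishing would make $T$ nilpotent.) Absent such an extra hypothesis, neither your proposal nor the paper shows that the Helton relation forbids a non-nilpotent quasinilpotent $T$, so the lemma as stated needs either that additional argument or a strengthened hypothesis such as left invertibility.
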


\begin{proof}
Let $r$ be the spectral radius of $T$, so that there exists $\rho\in\sigma_{ap}(T)$ with $|\rho|=r$. Note that by the non-nilpotency condition, $\rho\neq0$. Then there exists a sequence of unit vectors $h_i\in H$ such that $\Vert(T-\rho I_H)h_i\Vert\rightarrow0$ as $i\rightarrow0$.
\begin{align*}
\left\langle \gamma_{n}(T^{\ast},\lambda T)h_{i},h_{i}\right\rangle  &
=\sum_{k=0}^{n}(-1)^{n-k}{\binom{n}{k}}\left\langle (T^{\ast})^{k}(\lambda
T)^{n-k}h_{i},h_{i}\right\rangle \\
& =\sum_{k=0}^{n}(-1)^{n-k}{\binom{n}{k}}\left\langle (\lambda T%
)^{n-k}h_{i},T^{k}h_{i}\right\rangle 
\end{align*}

Thus, as $i\rightarrow\infty$, we have
\begin{align*}
\left\langle \gamma_{n}(T^{\ast},\lambda T)h_{i},h_{i}\right\rangle& \rightarrow\sum_{k=0}^{n}(-1)^{n-k}{\binom{n}{k}}\left\langle (\lambda
\rho)^{n-k}h_{i},\rho^{k}h_{i}\right\rangle \\
&=  \sum_{k=0}^{n}(-1)^{n-k}{\binom{n}{k}}(\lambda \rho)^{n-k}%
\bar\rho^{k}\left\langle h_{i},h_{i}\right\rangle \\
&= (\lambda \rho-\bar\rho)^{n}.
\end{align*}
However, by assumption, $\gamma_n(T^*,\lambda T)=0$ and $\rho\neq0$, so $|\lambda|=1.$
\end{proof}

\begin{proof}[Proof of Theorem~\ref{nsym}]
We can apply Theorem~\ref{heltonthm} directly to show that \ref{individualsnsym} implies \ref{tensornsym} in Theorem~\ref{nsym}. To prove the converse, assume $T_1\otimes T_2$ is an $n$-symmetry, so that Theorem~\ref{heltonthm} tells us
$$\Phi((x-y)^l)(T_1^*,\lambda T_1)=0,\quad \Phi((x-y)^m)(T_2^*,(1/\lambda)T_2)=0.$$

Thus it only remains to be shown that $|\lambda|=1$, but this is the content of Lemma~\ref{lambda1}.
\end{proof}

\section{Nilpotent perturbation of a left $n$-inverse}\label{anothersection}

Inspired by the construction of $n$-isometries using
the sum of isometries and nilpotent operators in \cite{A1} and \cite{BMN1}, the second
named author proved in Theorem 2 of \cite{GuSM} that if $S$ is a left $m$-inverse of $T$ and $Q$ is a nilpotent operator of order $l$ such that $QS=SQ,$ then $%
S+Q $ is an $n$-inverse of $T$, where $n=l+m-1$. In this section we show that the theorem can be extended in a certain context, and prove the
converse using the
algebraic geometry approach developed in previous sections. The same result actually applies in a much stronger way to Helton classes, where we use it to prove Theorem~\ref{nsym2}.

We achieve these theorems using the same framework built in Section~\ref{tensorsection}, but the map $\delta:R\rightarrow R\otimes R$ is different than in Section~\ref{tensorsplittingsection}.

\begin{proposition}\label{weakperturbation}
Let $A_1,A_2$ be $\mathbb{C}$-algebras and $S,T\in A_1$ and $Q\in A_2$ with $S,T$ not nilpotent, $Q$ nonzero. Also let $p\in\mathbb{C}[x,y]$. If $\Phi(p^l)(S+\lambda1, T)=0$ in $A_1$ and $(Q-\lambda1)^m=0$ in $A_2$, then 
$$\Phi(p^n)(S\otimes 1+ 1\otimes Q, T\otimes 1)=0$$
 where $n=l+m-1$.
\end{proposition}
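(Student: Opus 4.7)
The plan is to invoke the framework of Section~\ref{tensorsection} with the injective $\mathbb{C}$-algebra homomorphism $\delta\colon R\to R\otimes R$ defined by $\delta(x)=x\otimes 1+1\otimes x$ and $\delta(y)=y\otimes 1$. Taking $S_1=S$, $T_1=T$ in $A_1$ and $S_2=Q$, $T_2=1$ in $A_2$, one checks that $\kappa(X)=S\otimes 1+1\otimes Q$ and $\kappa(Y)=T\otimes 1$---precisely the operators in the conclusion---and that $\hat{p}(x_1,y_1,x_2,y_2)=p(x_1+x_2,y_1)$. By Theorem~\ref{transthm}, proving $\Phi(p^n)(S\otimes 1+1\otimes Q,T\otimes 1)=0$ reduces to proving $\hat{p}^n\in I'+J''$, where $I=\Phi^{-1}(\ker\kappa_1)$ and $J=\Phi^{-1}(\ker\kappa_2)$.

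The key step is to apply Lemma~\ref{weaktplem} with auxiliary polynomials $q_1(x,y)=p(x+\lambda,y)$ and $q_2(x,y)=x-\lambda$. Translating the hypotheses, one verifies $\kappa_1\Phi(q_1^l)=\Phi(p^l)(S+\lambda 1,T)=0$, so $q_1^l\in I$, and $\kappa_2\Phi(q_2^m)=(Q-\lambda 1)^m=0$, so $q_2^m\in J$. To show $\hat{p}\in\langle q_1(x_1,y_1),q_2(x_2,y_2)\rangle$, view $p(x_1+x_2,y_1)-p(x_1+\lambda,y_1)$ as a polynomial in $x_2$ with $x_1,y_1$ treated as parameters: it vanishes at $x_2=\lambda$ and is therefore divisible by $x_2-\lambda$. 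This yields
\begin{equation*}
\hat{p}=q_1(x_1,y_1)+q_2(x_2,y_2)\,r(x_1,x_2,y_1)
\end{equation*}
for some $r\in S$, and Lemma~\ref{weaktplem} then delivers $\Phi(p^n)\in\ker\kappa$ with $n=l+m-1$, which is the desired identity.

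I do not anticipate any substantial obstacle. The main intellectual work is choosing the correct $\delta$ (encoding the additive perturbation by $x\mapsto x\otimes 1+1\otimes x$) and the correct auxiliary polynomials $q_1,q_2$; once these are in place the Taylor expansion in $x_2$ is immediate and Lemma~\ref{weaktplem} handles the rest. I note that the non-nilpotency and nonzero hypotheses in the statement play no role in this direction, and presumably are included to support a converse that will appear later in the section.
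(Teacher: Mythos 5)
Your proof is correct and matches the paper's own argument step for step: same choice of $\delta$, same auxiliary polynomials $q_1(x,y)=p(x+\lambda,y)$ and $q_2(x,y)=x-\lambda$, same expansion of $p(x_1+x_2,y_1)$ to exhibit membership in $\langle q_1,q_2\rangle$, and the same invocation of Lemma~\ref{weaktplem}. The only cosmetic difference is that you set $T_2=1$ while the paper sets $\kappa_2(Y)=0$; since $\delta(y)=y\otimes 1$, the value of $T_2$ never enters the computation, so both choices are equally valid.
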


\begin{proof}
We will apply Lemma~\ref{weaktplem} using $\delta:R\rightarrow R\otimes R$ defined by $\delta(x)=x\otimes 1+1\otimes x$ and $\delta(y)=y\otimes 1$. Say $\kappa_1:F\rightarrow A_1$ is defined by $\kappa_1(X)=S$, $\kappa_1(Y)=T$, and $\kappa_2:F\rightarrow A_2$ is defined by $\kappa_2(X)=Q$ and $\kappa_2(Y)=0$.

First notice that if $p\in R$, then $\mu\delta(p)=p(x_1+x_2,y_1)$. Supposing $p(x,y)=\sum k_{ij}x^iy^j$, we can write
\begin{align*}
 p(x_1+x_2,y_1)&=\sum k_{ij} (x_1+x_2)^i y_1^j\\
 &=\sum k_{ij} (x_1+\lambda)^iy_1^j+(x_2-\lambda)g_{ij}\quad\text{for some $g_{ij}$}\\
 &=\left(\sum k_{ij} (x_1+\lambda)^iy_1^j\right)+ (x_2-\lambda)g\quad\text{for some $g$}\\
 &=p(x_1+\lambda,y_1) + (x_2-\lambda)g
\end{align*}

Let $q_1(x,y)=p(x+\lambda,y)$ and $q_2(x,y)=x-\lambda$, and we can see then that $\hat p\in\langle q_1(x_1,y_1),q_2(x_2,y_2)\rangle$. Also, notice that $\Phi(q_1^l)\in\ker\kappa_1$ and $\Phi(q_2^m)\in\ker\kappa_2$ by assumption. Thus by Lemma~\ref{weaktplem}, $\Phi(p^n)\in\ker\kappa$.
\end{proof}

We say $p(x,y)$ is \emph{linear in $x$} if $p(x,y)$ can be written as $\alpha(y)x+\beta(y)$ for some $\alpha,\beta$.

\begin{theorem}\label{strongperturbation}
Let $A_1,A_2$ be $\mathbb{C}$-algebras and $S,T\in A_1$ and $Q\in A_2$ with $S,T$ not both zero and $Q$ nonzero. Also let $p\in\mathbb{C}[x,y]$ be any irreducible polynomial. Then the following are equivalent:

\begin{enumerate}[label=(\alph*)]
\item\label{another1} $\Phi(p^n)(S\otimes 1+ 1\otimes Q, T\otimes 1)=0$.
\item\label{another2} There exist positive integers $l,m$ with $l+m=n+1$ and $\lambda\in\mathbb{C}$ so that $\Phi(p^l)(S+\lambda1,T)=0$ and $(Q-\lambda1)^m=0$.
\end{enumerate}
\end{theorem}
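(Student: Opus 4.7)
The plan is to work within the framework of Section~\ref{tensorsection} using the injective $\mathbb{C}$-algebra homomorphism $\delta:R\to R\otimes R$ defined by $\delta(x)=x\otimes 1+1\otimes x$ and $\delta(y)=y\otimes 1$, together with $\kappa_1:F\to A_1$ sending $X\mapsto S,\,Y\mapsto T$ and $\kappa_2:F\to A_2$ sending $X\mapsto Q,\,Y\mapsto 0$. Let $I=\Phi^{-1}(\ker\kappa_1)$ and $J=\Phi^{-1}(\ker\kappa_2)$; these are proper ideals of $R$ since $S,T$ are not both zero and $Q$ is nonzero. Under this setup $\hat p(x_1,y_1,x_2,y_2)=p(x_1+x_2,y_1)$, and the condition $\kappa_2(Y)=0$ automatically places $y\in J$. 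The implication (b)$\Rightarrow$(a) is exactly Proposition~\ref{weakperturbation} (whose proof via Lemma~\ref{weaktplem} in fact requires no hypothesis on $S,T$), so the substantive task is to prove (a)$\Rightarrow$(b).

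Restricting to the case of $p$ linear in $x$, write $p(x,y)=\alpha(y)x+\beta(y)$ with $\alpha\not\equiv 0$. The algebraic identity
\[
p(x_1+x_2,y_1)=p(x_1+\lambda,y_1)+\alpha(y_1)(x_2-\lambda)
\]
suggests applying Theorem~\ref{iff} with $q_1=p(x+\lambda,y)$, $q_2=x-\lambda$, $f=1$, and $g=\alpha(y)$ for a suitable $\lambda\in\mathbb{C}$. This identity is Condition~1 of Theorem~\ref{iff}. For Condition~3 I invoke Remark~\ref{generate}: the pair $f=1,\,q_2$ trivially generates $R$, and $\langle g,q_1\rangle=\langle\alpha(y),\beta(y)\rangle=R$ because $\alpha,\beta$ share no common root (otherwise a factor $y-b_0$ would divide $p$, contradicting its irreducibility).

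The main obstacle is locating the correct $\lambda$ and establishing Condition~2. By Theorem~\ref{transthm}, (a) rephrases as $\hat p^n\in I'+J''$, so $\hat p\in\sqrt{I'+J''}$. Since $y\in J$, every point of $V(J)$ has $y$-coordinate $0$. Pick any $(a,b)\in V(I)$; this exists by the Weak Nullstellensatz applied to the proper ideal $I$. Lemma~\ref{lem:nullstellensatz} then places the polynomial $p(a+x_2,b)=\alpha(b)x_2+(\alpha(b)a+\beta(b))$ in $\sqrt J\cap\mathbb{C}[x_2]$. The leading coefficient $\alpha(b)$ cannot vanish, since otherwise $\beta(b)$ would be a nonzero constant in $\sqrt J$, forcing $J=R$. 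Thus the squarefree part of the minimal polynomial of $Q$ must divide a genuine degree-one polynomial in $x_2$, forcing $Q$ to have a unique eigenvalue $\lambda$ with $(Q-\lambda)^M=0$ for some $M$. Feeding the now-identified point $(\lambda,0)\in V(J)$ back into Lemma~\ref{lem:nullstellensatz} in the other direction gives $p(x_1+\lambda,y_1)\in\sqrt I$, hence $\Phi(q_1^L)\in\ker\kappa_1$ for some $L$. Theorem~\ref{iff} now delivers the equivalence with the sharp bound $l+m=n+1$.
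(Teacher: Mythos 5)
Your proposal is correct and follows essentially the same route as the paper: the same $\delta$, $\kappa_1$, $\kappa_2$, the same identity $\hat p = f\,q_1 + g\,q_2$ with $q_1 = p(x+\lambda,y)$, $q_2 = x-\lambda$, $f=1$, $g=\alpha(y)$, and the same appeal to Lemma~\ref{lem:nullstellensatz}, Remark~\ref{generate}, and Theorem~\ref{iff}. The only real difference is cosmetic: the paper first picks any $(\lambda,d)\in V(J)$ (nonempty since $Q\neq 0$) and then locates a point $(a,b)\in V(I)$ with $\alpha(b)\neq 0$, whereas you start from a point of $V(I)$, derive that $\alpha(b)(x-\lambda)\in\sqrt J$, conclude $(Q-\lambda 1)^M=0$ and $V(J)=\{(\lambda,0)\}$, and then feed $(\lambda,0)$ back to get $q_1\in\sqrt I$ --- arriving at the same point. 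One small streamlining: once you have $\alpha(b)(x-\lambda)\in\sqrt J$ with $\alpha(b)\neq 0$, the conclusion $(Q-\lambda 1)^M=0$ follows directly from raising to a power and applying $\kappa_2\Phi$; there is no need to invoke the minimal polynomial of $Q$. You also correctly note (as the proof in the paper implicitly does) that the argument requires $p$ to be linear in $x$, a hypothesis defined immediately before the theorem but apparently dropped from its statement.
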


\begin{proof}
We will use the same definitions of $\delta$ and $\kappa_i$ as in the proof of Proposition~\ref{weakperturbation}. 

Because of Proposition~\ref{weakperturbation}, we need only prove that \ref{another1} implies \ref{another2}. Since $Q$ is assumed to be nonzero, $\ker\kappa_2$ is a proper ideal, so $V(J)$ is non-empty, where $J=\Phi^{-1}(\ker\kappa_2)$. Therefore, let $(\lambda,d)$ be any point in $V(J)$ and set 
$$q_1(x,y):=\hat p(x,y,\lambda,d)=p(x+\lambda,y).$$
Then by Lemma~\ref{lem:nullstellensatz}, $q_1\in\sqrt I$.

By the linearity assumption we can write $p(x,y)=\alpha(y)x+\beta(y)$. We claim that there exists $(a,b)\in V(I)$ so that $\alpha(b)\neq 0$. Indeed, if no such $(a,b)$ exists, then $\alpha(y)\in\sqrt I$, but then since $q_1(x,y)=\alpha(y)(x+\lambda)+\beta(y)\in\sqrt I$, we also have $\beta(y)\in\sqrt I$, and since $p$ was assumed to be irreducible, this means $I=R$, contrary to the assumption that $S$ and $T$ are not both zero.

Therefore, pick $(a,b)\in V(I)$ so that $\alpha(b)\neq 0$. Then by Lemma~\ref{lem:nullstellensatz},
$$q_2(x,y):=\hat p(a,b,x,y)=p(x+a,b)=p(a+\lambda,b)+\alpha(b)(x-\lambda)=\alpha(b)(x-\lambda)\in\sqrt I.$$
And since $\alpha(b)\neq0$, this means $q_2\in\sqrt I$.

We now show that $\hat p, q_1,q_2$ satisfy the conditions of Theorem~\ref{iff}. The proof of Proposition~\ref{weakperturbation} shows that Condition 1 is satisfied, and we just showed that Conditions 2 is satisfied. We use Remark~\ref{minimality} to prove Condition 3. In particular, $q_2$ and $1$ clearly generate $R$, and since $\alpha$ and $\beta$ share no roots, $q_1=\alpha(y)(x+\lambda)+\beta(y)$ and $\alpha(y)$ also generate $R$.
\end{proof}



%
%

%
%
%
%

\begin{corollary}
\label{conjecturesum}
Let $X,Y$ be Banach spaces, and assume $S,T\in B(X)$ and $Q\in B(Y).$ Then the following are equivalent:
\begin{enumerate}[label=(\alph*)]
\item The tensor sum
$S\otimes I_{Y}+I_{X}\otimes Q$ on $X\overline{\otimes}Y$ is
a strict left $n$-inverse of $T\otimes I_{Y}$
.

\item There exist positive $l$ and $m$ such that $l+m=n-1$ and some constant $\lambda\in\mathbb{C}$ so that $S+\lambda I_{X}$ is a strict left $l$-inverse
of $T$ 
 and $Q-\lambda I_{Y}$
is a nilpotent operator of order $l$.
\end{enumerate}
\end{corollary}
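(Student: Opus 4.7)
The plan is to deduce this corollary as a direct operator-theoretic specialization of Theorem~\ref{strongperturbation} applied to the polynomial $p(x,y) = xy - 1$, taking $A_1 = B(X)$ and $A_2 = B(Y)$. Note $p(x,y)=xy-1$ is irreducible and linear in $x$ (it equals $y\cdot x + (-1)$, with $\alpha(y)=y$ and $\beta(y)=-1$ sharing no common roots), so the hypotheses of Theorem~\ref{strongperturbation} are met.

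First I would rewrite part (a) in the algebraic form required by Theorem~\ref{strongperturbation}. By the definition of $\Phi$,
\[
\Phi\bigl((xy-1)^n\bigr)\bigl(S\otimes 1 + 1\otimes Q,\; T\otimes 1\bigr)
= \beta_n\bigl(S\otimes I_Y + I_X\otimes Q,\; T\otimes I_Y\bigr),
\]
so the condition that $S\otimes I_Y + I_X\otimes Q$ is a left $n$-inverse of $T\otimes I_Y$ is exactly the condition $\Phi(p^n)\in\ker\kappa$ appearing in Theorem~\ref{strongperturbation}\ref{another1}. By Lemma~\ref{complemma} applied to the Banach algebras $B(X)$ and $B(Y)$, this algebraic relation holds in $B(X)\otimes B(Y)$ if and only if it holds in $B(X\overline{\otimes}Y)$, so one may freely work in the algebraic tensor product.

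Next I would invoke Theorem~\ref{strongperturbation} to obtain the equivalence of \ref{another1} with the existence of positive integers $l,m$ satisfying $l+m = n+1$ and $\lambda\in\mathbb C$ such that
\[
\Phi\bigl((xy-1)^l\bigr)(S+\lambda I_X,\, T) = 0 \quad\text{and}\quad (Q-\lambda I_Y)^m = 0,
\]
which translates precisely to: $S+\lambda I_X$ is a left $l$-inverse of $T$, and $Q-\lambda I_Y$ is nilpotent of order at most $m$. This yields the non-strict form of the equivalence (a)$\Leftrightarrow$(b).

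The only remaining issue, and the subtle step, is the strictness qualifier; this is handled exactly as at the end of the proof of Theorem~\ref{ninversethm}. Choose $l$ minimal so that $S+\lambda I_X$ is a strict left $l$-inverse of $T$, and $m$ minimal so that $(Q-\lambda I_Y)^m = 0$ but $(Q-\lambda I_Y)^{m-1}\neq 0$. Proposition~\ref{weakperturbation} (the easy direction) shows $S\otimes I_Y + I_X\otimes Q$ is then a left $(l+m-1)$-inverse of $T\otimes I_Y$, while the sharp direction of Theorem~\ref{strongperturbation}, proved via Theorem~\ref{iff}, forces $l+m-1$ to be the \emph{least} such integer, so strictness in (a) matches strictness in (b). The recursive formula (\ref{recursive}) provides the standard bookkeeping between strict and non-strict statements. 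The main obstacle, as in Theorem~\ref{ninversethm}, is not in this corollary itself but in the sharp minimality bound already packaged into Theorem~\ref{strongperturbation}; here we need only verify that the chosen $p(x,y) = xy-1$ satisfies the linearity and irreducibility hypotheses of that theorem, which is immediate.
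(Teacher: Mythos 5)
Your proposal is correct and takes essentially the same route as the paper: invoke Theorem~\ref{strongperturbation} with $p(x,y)=xy-1$ on $A_1=B(X)$, $A_2=B(Y)$ and transfer between the algebraic and completed tensor products via Lemma~\ref{complemma}. Your extra paragraph spelling out the strictness bookkeeping (via minimality and the recursion~(\ref{recursive})) fills in a detail that the paper's one-line proof leaves implicit, and you are right that the hypotheses of Theorem~\ref{strongperturbation} (irreducibility, linearity in $x$ with $\alpha,\beta$ coprime) are trivially satisfied by $xy-1$; note only that the printed corollary contains apparent typos ($l+m=n-1$ should read $l+m=n+1$, and the nilpotency order should be $m$, not $l$), which your derivation from Theorem~\ref{strongperturbation} silently corrects.
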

\begin{proof}
Follows directly from Proposition~\ref{strongperturbation} using $p(x,y)=xy-1$ 
 and Lemma~\ref{complemma}.
\end{proof}

Corollary~\ref{conjecturesum} was proven in Theorem~22 of \cite{GuSM} using operator theoretic techniques, but here we see that it follows purely from algebraic considerations. We could make a similar statement about Helton classes, but we will see in Proposition~\ref{new} that a much stronger result is possible when $p(x,y)=x-y$.

It is natural to ask how Corollary~\ref{conjecturesum}  can be applied to $n$-isometries. The following result (stated here in the equivalent tensor
product of operators instead of elementary operators) is proved in Theorem~12 of \cite{Gu}.

\begin{theorem}
\label{main2}Suppose $H$ and $K$ are Hilbert spaces. Assume $S\otimes
I_{K}+I_{H}\otimes Q$ is a strict $n$-isometry and 
\begin{equation}\label{speceq}
\sigma (S\otimes
I_{K}+I_{H}\otimes Q)\neq \left\{ \pm e^{\pm i\alpha }e^{i\theta }\text{ for
some }\alpha ,\theta \in \lbrack 0,2\pi )\right\}.
\end{equation}
Then there exist $m$
and $l$ such that $m+2l=n+2$, and $S+\lambda I_H$ (or $\left( Q-\lambda I_K\right) 
$) is a strict $m$-isometry and $Q+\lambda I_K$ (or $S-\lambda I_H)$ is a
nilpotent operator of order $l$ for some constant $\lambda\in\mathbb{C}$.
\end{theorem}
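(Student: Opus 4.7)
The natural approach within this paper's framework is to apply the general machinery of Sections~\ref{algsection}--\ref{tensorsection} with an injective $\mathbb{C}$-algebra homomorphism $\delta\colon R\to R\otimes R$ adapted to the symmetric tensor-sum structure of $n$-isometries. Specifically, I would take $\delta(x)=x\otimes 1+1\otimes x$ \emph{and} $\delta(y)=y\otimes 1+1\otimes y$, then set $\kappa_1\colon F\to B(H)$ by $\kappa_1(X)=S^{\ast},\kappa_1(Y)=S$ and $\kappa_2\colon F\to B(K)$ by $\kappa_2(X)=Q^{\ast},\kappa_2(Y)=Q$. With $p(x,y)=xy-1$ and $A=S\otimes I_K+I_H\otimes Q\in B(H\overline{\otimes}K)$, the strict $n$-isometry hypothesis $\beta_n(A^{\ast},A)=0$ becomes $\Phi(p^n)\in\ker\kappa$, which by Theorem~\ref{transthm} translates into the commutative-algebra statement
$$\hat p^n=\bigl((x_1+x_2)(y_1+y_2)-1\bigr)^n\in I'+J'',$$
where $I=\Phi^{-1}(\ker\kappa_1)$ and $J=\Phi^{-1}(\ker\kappa_2)$.

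Next I would extract geometric content via Lemma~\ref{lem:nullstellensatz}: for every $(c,d)\in V(J)$, the polynomial $(x+c)(y+d)-1$ lies in $\sqrt I$, and symmetrically for points of $V(I)$. A dimension count forces at least one of $V(I),V(J)$ to be zero-dimensional. Say $V(J)$ is; then the adjoint-compatibility built into $\kappa_2$ (with $\kappa_2(X)=Q^{\ast}$ the operator-algebra adjoint of $\kappa_2(Y)=Q$) forces the single point of $V(J)$ to be of the form $(\bar\lambda,\lambda)$, and combining this with the generators of $J$ yields $(y-\lambda)^l,(x-\bar\lambda)^l\in J$, i.e.\ $Q-\lambda I_K$ is nilpotent of some order $l$. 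Correspondingly, $V(I)$ must lie on the curve $(x+\bar\lambda)(y+\lambda)=1$, which algebraically says $S+\lambda I_H$ is a (strict) $m$-isometry. The opposite case, in which $V(I)$ collapses, yields the ``swap'' variant with the roles of $S$ and $Q$ exchanged.

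The principal obstacle---and the reason a direct appeal to Theorem~\ref{iff} fails here---is that $\hat p=(x_1+x_2)(y_1+y_2)-1$ does \emph{not} admit a decomposition of the form $fq_1(x_1,y_1)+gq_2(x_2,y_2)$ with each $q_i$ supported on a single set of variables, in sharp contrast to the clean identities~(\ref{decomps}) and the decomposition underlying Proposition~\ref{weakperturbation}. The cross terms $x_1y_2+x_2y_1$ entangle the two sides of the tensor product, so the ``splitting'' strategy of Theorem~\ref{iff} cannot be applied verbatim. This is also the precise algebraic reason Theorem~\ref{nsym2} could be proved purely algebraically---the analogous polynomial $(x_1+x_2)-(y_1+y_2)$ \emph{does} split cleanly as $(x_1-y_1)+(x_2-y_2)$---while Theorem~\ref{main2} requires additional input.

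That additional input is precisely the spectral hypothesis~(\ref{speceq}), which excludes the locus at which $V(I)$ and $V(J)$ can both remain positive-dimensional while satisfying the entangled vanishing $(a+c)(b+d)=1$. Under~(\ref{speceq}), the dimension collapse above is forced, and operator-theoretic tools from~\cite{Gu}---essentially a Jordan-type decomposition $A=U+N$ with $U$ a strict $m$-isometry commuting with a nilpotent $N$ of order $l$---can then be invoked to promote the geometric decomposition to an honest operator decomposition. The combinatorial relation $m+2l=n+2$ (rather than the $l+m=n+1$ of Corollary~\ref{conjecturesum}) reflects the fact that now both $A^{\ast}$ and $A$ are tensor sums: each copy of the nilpotent perturbation in $\beta_n(A^{\ast},A)$ contributes a factor of $l$ to the strictness order, giving a doubled $2(l-1)$ in place of the single $l-1$ that governs the asymmetric Corollary~\ref{conjecturesum}.
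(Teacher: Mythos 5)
The paper does not actually prove Theorem~\ref{main2}; it is stated as an imported result, explicitly cited as ``proved in Theorem~12 of \cite{Gu}'', and the paper further notes that the spectral condition (\ref{speceq}) is necessary by Proposition~14 of \cite{Gu}. So there is no in-paper argument to compare your sketch against: the paper itself defers this result to \cite{Gu}.

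Judged on its own, your setup is correct---$\hat p = (x_1+x_2)(y_1+y_2)-1$ with $\kappa_1(X,Y)=(S^\ast,S)$, $\kappa_2(X,Y)=(Q^\ast,Q)$---and you have correctly identified the essential algebraic obstruction: this $\hat p$ admits no decomposition $f(x_2,y_2)q_1(x_1,y_1)+g(x_1,y_1)q_2(x_2,y_2)$, so Theorem~\ref{iff} cannot be invoked. That diagnosis is genuinely valuable and explains why the $n$-symmetry analogue (Theorem~\ref{nsym2}, where $\hat p=(x_1-y_1)+(x_2-y_2)$) goes through purely algebraically while Theorem~\ref{main2} does not.

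But from that point on the proposal is a plausibility narrative, not a proof. (i) The assertion that ``a dimension count forces at least one of $V(I),V(J)$ to be zero-dimensional'' is not established; that failure mode is precisely what the spectral hypothesis is supposed to rule out, yet you never connect (\ref{speceq}) to the ideal-theoretic dichotomy. (ii) The claim that adjoint compatibility forces a singleton $V(J)$ to have the form $(\bar\lambda,\lambda)$ needs a Hilbert-space argument in the spirit of Lemma~\ref{lambda1} or Lemma~\ref{imaginary}, which you do not supply. (iii) Passing from ``$V(I)$ lies on $(x+\bar\lambda)(y+\lambda)=1$'' to ``$S+\lambda I_H$ is a \emph{strict} $m$-isometry'' requires upgrading radical membership to membership with a controlled exponent---exactly the content of Theorem~\ref{iff}, which you just explained is unavailable here. (iv) The bookkeeping $m+2l=n+2$ is only given as a heuristic (``each copy of the nilpotent contributes a factor of $l$'') with no derivation. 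Finally, your invocation of ``operator-theoretic tools from \cite{Gu}'' to close the gap is precisely what the paper itself does by citing Theorem~12 of \cite{Gu}; so the proposal adds an appealing but unverified geometric picture on top of, but does not replace, that citation.
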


The spectral condition (\ref{speceq}) is necessary by Proposition~14 of \cite{Gu}.

Somewhat surprisingly, Proposition~\ref{strongperturbation} can be strengthened significantly when applied to $p(x,y)=x-y$, and so we treat this case separately.

\begin{proposition}\label{new}
Suppose $A_1,A_2$ are $\mathbb{C}$-algebras and $S_i,T_i\in A_i$ are nonzero for $i=1,2$. Then the following are equivalent:
\begin{enumerate}[label=(\alph*)]
\item\label{anew} $\gamma_n(S_1\otimes 1 + 1\otimes S_2,T_1\otimes 1 + 1\otimes T_2)=0$.
\item\label{bnew} There exist $l,m\geq 0$ and $\lambda\in\mathbb{C}$ so that $l+m=n+1$ and $\gamma_l(S_1+\lambda1,T_1)=0$ and $\gamma_m(S_2-\lambda1,T_2)=0$.
\end{enumerate}
\end{proposition}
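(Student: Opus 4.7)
The plan is to specialize the framework of Section~\ref{tensorsection} using the fully symmetric ``additive'' coproduct $\delta:R\to R\otimes R$ defined by
$$\delta(x)=x\otimes 1+1\otimes x,\qquad \delta(y)=y\otimes 1+1\otimes y.$$
With this choice $\kappa(X)=S_1\otimes 1+1\otimes S_2$ and $\kappa(Y)=T_1\otimes 1+1\otimes T_2$, and for $p(x,y)=x-y$ we have $\hat p=\mu\delta(x-y)=(x_1-y_1)+(x_2-y_2)\in S$. I will then apply Theorem~\ref{iff} with
$$q_1(x,y)=(x-y)+\lambda,\qquad q_2(x,y)=(x-y)-\lambda,\qquad f=g=1,$$
for a scalar $\lambda\in\mathbb{C}$ still to be chosen. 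Condition 1 of Theorem~\ref{iff} is immediate since $q_1+q_2=\hat p$, and Condition 3 is vacuous when $f=g=1$. A short expansion using $q_1=(x+\lambda)-y$ shows $\Phi(q_1^l)(S_1,T_1)=\gamma_l(S_1+\lambda\,1,T_1)$ and $\Phi(q_2^m)(S_2,T_2)=\gamma_m(S_2-\lambda\,1,T_2)$, so the conclusion of Theorem~\ref{iff} will translate verbatim into the desired equivalence (a)~$\Leftrightarrow$~(b).

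The heart of the proof will be verifying Condition 2 of Theorem~\ref{iff}, i.e., exhibiting a single $\lambda$ for which $q_1^L\in I$ and $q_2^M\in J$ for some positive integers $L,M$, where $I=\Phi^{-1}(\ker\kappa_1)$ and $J=\Phi^{-1}(\ker\kappa_2)$. Since $A_1,A_2$ are $\mathbb{C}$-algebras with $1\neq 0$, both $I$ and $J$ are proper, so $V(I),V(J)\neq\emptyset$ by the Weak Nullstellensatz. Assuming (a), Theorem~\ref{transthm} yields $\hat p^n\in I'+J''$, and hence $\hat p$ itself must vanish on $V(I'+J'')=V(I)\times V(J)$; this says $(a-b)+(c-d)=0$ for every $(a,b)\in V(I)$ and every $(c,d)\in V(J)$. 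Fixing any $(c_0,d_0)\in V(J)$ and setting $\lambda=c_0-d_0$, the relation forces $a-b=-\lambda$ on all of $V(I)$ and $c-d=\lambda$ on all of $V(J)$. Hilbert's Nullstellensatz (or directly Lemma~\ref{lem:nullstellensatz}) then gives $q_1\in\sqrt I$ and $q_2\in\sqrt J$, so $q_1^L\in I$ and $q_2^M\in J$ for some $L,M$, as required.

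With all three conditions of Theorem~\ref{iff} in place, the theorem delivers (a)~$\Rightarrow$~(b); the reverse direction will follow immediately from Lemma~\ref{weaktplem} applied to $\hat p=q_1+q_2$. The main obstacle throughout is the production of a single common scalar $\lambda$ that serves both tensor factors at once. What makes this possible for $p(x,y)=x-y$, in contrast with $xy-1$, is the purely additive form $\hat p=u+v$ in the shifted coordinates $u=x_1-y_1$, $v=x_2-y_2$: vanishing of one scalar function on the product variety forces $u$ and $v$ to be individually constant on $V(I)$ and $V(J)$ with opposite values, so $\lambda$ can be read off from a single pair of base points. This is also why, unlike in Theorem~\ref{qhimpliessts}, no one-sided invertibility hypothesis on the $S_i,T_i$ is needed.
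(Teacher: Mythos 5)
Your proof is correct and follows essentially the same path as the paper's: the same additive coproduct $\delta(x)=x\otimes 1+1\otimes x$, $\delta(y)=y\otimes 1+1\otimes y$, the same choice $q_1=(x-y)+\lambda$, $q_2=(x-y)-\lambda$ with $f=g=1$ in Theorem~\ref{iff}, and the same Nullstellensatz argument (via Theorem~\ref{transthm} and Lemma~\ref{lem:nullstellensatz}) to pin down a common $\lambda$ from a point of $V(J)$. Your closing remark explaining why the additive form $\hat p=u+v$ is what makes a single $\lambda$ work without invertibility hypotheses is a nice addition, but the underlying argument is the paper's.
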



\begin{proof}
We define $\delta:R\rightarrow R\otimes R$ by $\delta(x)=x\otimes 1+1\otimes x$ and $\delta(y)=y\otimes 1+1\otimes y$ and let $p(x,y)=x-y$. Thus $\hat p(x_1,y_1,x_2,y_2)=x_1+x_2-y_1-y_2$. Let $\kappa_i:F\rightarrow A_i$ via $\kappa_i(X)=S_i$ and $\kappa_i(Y)=T_i$.

Since $S_i,T_i$ are nonzero, $I$ and $J$ are proper ideals. Thus there exists $(c,d)\in V(J)$, and so by Theorem~\ref{thm:every} if we let $\lambda=c-d$, 
$$q_1(x,y):=p(x+\lambda,y)=x-y+\lambda\in\sqrt I.$$

Then let $(a,b)\in V(I)$, so that $p(a+c,b+d)=a-b+\lambda$, and thus $a-b=-\lambda$. Therefore, by Theorem~\ref{thm:every},
$$q_2(x,y):=p(x-\lambda,y)=x-y-\lambda\in\sqrt J.$$

We have now shown that $p,q_1,q_2$ satisfy the conditions of Theorem~\ref{iff}. In particular, $\hat p(x_1,y_1,x_2,y_2)=1q_1(x_1,y_1)+1q_2(x_2,y_2)$, so Condition 1 is satisfied. We just showed that Condition 2 holds, and Condition 3 is satisfied trivially.
\end{proof}

\begin{remark}\label{rem2}
We remark that condition \ref{bnew} of Proposition~\ref{new} could have been equivalently stated as $\gamma_l(S_1+\alpha1,T_1-\beta1)=0$ and $\gamma_m(S_2-\alpha1,T_2+\beta1)=0$ by making the substitution $\alpha +\beta=\lambda$.
\end{remark}


Surprisingly, for $n$-symmetries, we have the following nice result without
any spectral condition. We first state a lemma which follows from formula
(4) in Lemma~7 of \cite{GS}.

\begin{lemma}\label{imaginary}
If $\gamma_n(T^*+\lambda I, T-\lambda I)=0$, then $\lambda$ is pure imaginary so that $T-\lambda I$ is an $n$-symmetry.
\end{lemma}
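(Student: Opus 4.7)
The plan is to imitate the approximate-eigenvector trick already used in the proof of Lemma~\ref{lambda1}. The key observation to set up is that if we set $R:=T-\lambda I$ and $\mu:=\lambda+\bar\lambda=2\,\mathrm{Re}(\lambda)$, then
\[
T^*+\lambda I \;=\; (T-\lambda I)^*+(\lambda+\bar\lambda)I \;=\; R^*+\mu I,
\]
so the hypothesis $\gamma_n(T^*+\lambda I,T-\lambda I)=0$ is exactly $\gamma_n(R^*+\mu I,R)=0$. Because $\mu$ is real, proving $\mu=0$ will show that $\lambda$ is pure imaginary; once we know $\bar\lambda=-\lambda$, we have $(T-\lambda I)^*=T^*-\bar\lambda I=T^*+\lambda I$, so the hypothesis reads $\gamma_n((T-\lambda I)^*,T-\lambda I)=0$, meaning $T-\lambda I$ is an $n$-symmetry. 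Thus the entire lemma reduces to showing $\mathrm{Re}(\lambda)=0$.

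To show $\mu=0$, I would pick any $\rho\in\sigma_{ap}(R)$, which is nonempty since the boundary of the spectrum of a bounded operator always lies in the approximate point spectrum. Let $(h_i)$ be a sequence of unit vectors with $\|(R-\rho I)h_i\|\to0$. Then $R^{n-k}h_i$ is close in norm to $\rho^{n-k}h_i$, and since $\mu\in\mathbb R$, $(R^*+\mu I)^k h_i$ is weakly close to $(\bar\rho+\mu)^k h_i$. Exactly as in the computation in Lemma~\ref{lambda1}, the inner product
\[
\bigl\langle\gamma_n(R^*+\mu I,R)h_i,h_i\bigr\rangle
\;=\;\sum_{k=0}^n(-1)^{n-k}\binom{n}{k}\bigl\langle R^{n-k}h_i,(R+\mu I)^k h_i\bigr\rangle
\]
converges to
\[
\sum_{k=0}^n(-1)^{n-k}\binom{n}{k}(\bar\rho+\mu)^k\rho^{n-k}
\;=\;(\bar\rho+\mu-\rho)^n.
\]
By hypothesis the left-hand side is identically zero, so $\bar\rho+\mu-\rho=0$, i.e.\ $\mu=\rho-\bar\rho=2i\,\mathrm{Im}(\rho)$.

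To finish, I would observe that the left-hand side $\mu=2\,\mathrm{Re}(\lambda)$ is real while the right-hand side $2i\,\mathrm{Im}(\rho)$ is pure imaginary; the only way they can agree is $\mu=0$ (and, incidentally, $\rho\in\mathbb R$). Thus $\mathrm{Re}(\lambda)=0$, which by the reduction in the first paragraph gives both $\lambda$ pure imaginary and $T-\lambda I$ an $n$-symmetry.

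I do not expect any substantial obstacle here: the only mildly delicate point is making sure one can apply the approximate-eigenvector limit, which is handled by choosing $\rho$ on the boundary of $\sigma(R)$ (so $\sigma_{ap}(R)\neq\emptyset$ automatically, with no non-nilpotency hypothesis required, unlike in Lemma~\ref{lambda1}). The cited formula~(4) in Lemma~7 of \cite{GS} presumably provides the same identity $\langle\gamma_n(R^*+\mu I,R)h_i,h_i\rangle\to(\bar\rho+\mu-\rho)^n$ algebraically, which would be an equivalent route.
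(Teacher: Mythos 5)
Your proposal is correct and takes essentially the same approach as the paper: both compute the limit of $\langle\gamma_n(\cdot,\cdot)h_i,h_i\rangle$ along an approximate eigenvector sequence to force a scalar identity; the paper works directly with $\alpha\in\sigma_{ap}(T)$ and deduces $\lambda=(\alpha-\bar\alpha)/2$, while you change variables to $R=T-\lambda I$ and $\mu=2\,\mathrm{Re}(\lambda)$ first, reaching the same pure-imaginary conclusion. Your side remark that no non-nilpotency hypothesis is needed (unlike in Lemma~\ref{lambda1}, where $\rho\neq 0$ was required to cancel it) is also accurate.
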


\begin{proof}
Let $\alpha\in\sigma_{ap}(T)$, so that there exists a sequence of unit vectors $h_i\in H$ so that $\Vert (T-\alpha I)h_i\Vert\rightarrow 0$ as $i\rightarrow0$.
\begin{align*}
\left\langle \gamma_{n}(T^{\ast}+\lambda I, T-\lambda I)h_{i},h_{i}\right\rangle  &
=\sum_{k=0}^{n}(-1)^{n-k}{\binom{n}{k}}\left\langle(T^*+\lambda)^k (T-\lambda I)^{n-k}h_{i},h_{i}\right\rangle \\
&= \sum_{k=0}^n(-1)^{n-k}{\binom{n}{k}}\langle (T-\lambda I)^{n-k}h_i,(T+\bar\lambda)^kh_i\rangle\\
& =\sum_{k=0}^{n}(-1)^{n-k}{\binom{n}{k}}\left\langle (\lambda T%
)^{n-k}h_{i},T^{k}h_{i}\right\rangle 
\end{align*}

Thus, as $i\rightarrow\infty$, we have
\begin{align*}
\left\langle \gamma_{n}(T^{\ast}+\lambda I, T-\lambda I)h_{i},h_{i}\right\rangle& \rightarrow\sum_{k=0}^{n}(-1)^{n-k}{\binom{n}{k}}\left\langle (\alpha-\lambda)h_i,(\alpha+\bar\lambda)h_i\right\rangle \\
&=  \sum_{k=0}^{n}(-1)^{n-k}{\binom{n}{k}}(\alpha-\lambda)^{n-k}%
(\bar\alpha+\lambda)^{k}\left\langle h_{i},h_{i}\right\rangle \\
&= (\alpha-\bar\alpha-2\lambda)^{n}.
\end{align*}

Hence since $\gamma_{n}(T^{\ast}+\lambda I, T-\lambda I)=0$, we must have $\lambda=(\alpha-\bar\alpha)/2$, and so $\lambda$ is pure imaginary.

Thus, $T^*+\lambda I=(T-\lambda I)^*$, so the original assumption shows that $T^*-\lambda I$ is an $n$-symmetry.
\end{proof}

\begin{proof}[Proof of Theorem~\ref{nsym2}]
We start by proving that (b) implies (a), so we begin by supposing that
$$\gamma_l(T_1^*+\bar\lambda I_H,T_1+\lambda I_H)=0,\quad \gamma_m(T_2^*-\bar\lambda I_K,T_2-\lambda I_K)=0.$$
Then by Proposition~\ref{new}, if $n=l+m-1$, we have
\begin{align*}
\gamma_n((T_1^*+\bar\lambda I_H)\otimes I_K+I_H\otimes(T_2^*-\bar\lambda I_K),& (T_1+\lambda I_H)\otimes I_K+I_H\otimes (T_2-\lambda I_K))\\
&=\gamma_n(T_1^*\otimes I_K+I_H\otimes T_2^*, T_1\otimes I_K+I_H\otimes T_2)\\
&=0
\end{align*}
which proves the claim.

To prove that (a) implies (b), we assume 
$$\gamma_n(T_1^*\otimes I_K+I_H\otimes T_2^*, T_1\otimes I_K+I_H\otimes T_2)=0$$
Using Remark~\ref{rem2} and Proposition~\ref{new} we can conclude that for some $\lambda$ and some positive $l,m$ with $l+m=n+1$, we have
$$\gamma_l(T_1^*-\lambda I_H,T_1+\lambda I_H)=0,\quad \gamma_m(T_2^*+\lambda I_K,T_2-\lambda I_H)=0$$
Finally, applying Lemma~\ref{imaginary}, we see that $\lambda$ is pure imaginary so that $\bar\lambda=-\lambda$, and the claim is proved.
\end{proof}

Stepan Paul

Department of Mathematics

University of California, Santa Barbara

Santa Barbara, CA 93106

e-mail: spaul@math.ucsb.edu

\bigskip

Caixing Gu

Department of Mathematics

California Polytechnic State University

San Luis Obispo, CA 93407

e-mail: cgu@calpoly.edu

\bigskip

Mathematics Subject Classification (2010). Primary 47A80, 47A10, Secondary
47B47\bigskip

Keywords: Banach algebra, algebraic operator, quasi-homogeneous polynomial, $n$-inverse, $n$-isometry, tensor product 

\begin{thebibliography}{99}                                                                                               %
\bibitem{A1} J. Agler, Sub-Jordan operators: Bishop's theorem, spectral
inclusion, and spectral sets, J. Operator Theory 7 (2)(1982) 373-395.

\bibitem {A2}J. Agler, \textit{A disconjugacy theorem for Toeplitz operators,
}American Journal of Mathematics, 112 (1990) 1--14.

\bibitem {AHS}J. Agler, W. Helton, M. Stankus, \textit{Classification of
hereditary matrices,} Linear Algebra Appl. 274 (1998) 125--160.

\bibitem {AS}J. Agler, M. Stankus, \textit{$m$-isometric transformations of
Hilbert space, I}, Integral Equations Operator Theory 21 (4)(1995) 383--429.

\bibitem {BH}J. A. Ball and J. W. Helton, Nonnormal dilations, disconjugacy, and constrained
spectral factorization, Integral Equations Operator Theory 312 (1980) 216--309.

\bibitem {Bay}F. Bayart, \textit{$m$-Isometries on Banach spaces}, Math.
Nachr. 284 (2011), 2141--2147.

\bibitem{BMN1} T. Berm\'{u}dez, A. Martin\'{o}n and J. Noda,\ An isometry
plus a nilpotent operator is an $m$-isometry and applications, J. Math.
Anal. Appl. 407(2) (2013) 505-512.

\bibitem {BMN2}T. Berm\'{u}dez, A. Martin\'{o}n, J. Noda, \textit{Products of
$m$-isometries,} Linear Algebra Appl. 438 (2013) 80--86.

\bibitem {BJ}F. Botelho, J. Jamison, \textit{Isometric properties of
elementary operators,} Linear Algebra Appl. 432 (2010) 357--365.

\bibitem {BJ2}F. Botelho, J. Jamison and B. Zheng, \textit{Strict isometries
of any orders}, Linear Algebra Appl. 436 (2012) 3303--3314.


\bibitem {COT}M. Ch\={o}, S. \^{O}ta and K. Tanahashi, \textit{Invertible
weighted shift operators which are $m$-isometries}, Proc. Amer. Math.
Soc. 141 (2013) 4241--4247.

\bibitem {Duggal}B. P. Duggal, \textit{Tensor product of $n$-isometries,} Linear Algebra Appl. 437 (1) (2012) 307--318.

\bibitem {DM}B. P. Duggal and V. M\"{u}ller, \textit{Tensor product of left
$n$-invertible operators}, Studia Math. 215 (2013), 113--125.

\bibitem {Gu}C. Gu, \textit{Elementary operators which are $m$-isometries,} Linear Algebra Appl. 451 (2014) 49--64.

\bibitem {Gu2}C. Gu, \textit{The $(m,q)$-isometric weighted shifts on $l_{p}$-spaces}, Integral Equations Operator Theory 82 (2015) 157--187.

\bibitem {GuSM}C. Gu, \textit{Structures of left $n$-invertible operators and their applications}, Studia Math. 226 (3) (2015) 189--211.

\bibitem {GS}C. Gu and M. Stankus, \textit{Some results on higher order
isometries and symmetries: products and sums with a nilpotent}, Linear Algebra
Appl. 469 (2015) 500--509.

\bibitem {Es}J. Eschmeier, \textit{Tensor products and elementary operators},
J. Reine Angew. Math. 390 (1988) 47--66.

\bibitem {Helton}J.W. Helton, \textit{Jordan operators in infinite dimensions
and Sturm-Liouville conjugate point theory,} Trans. Amer. Math. Soc. 170
(1972) 305--331.

\bibitem {HMS}P. Hoffman, M. Mackey and M. \'{O} Searcoid, \textit{On the
second parameter of an $(m,p)$-isometry}, Integral Equations Operator
Theory 71 (2011) 389--405.

\bibitem {KK2}I. Kim, Y. Kim, E. Ko, J. E. Lee, \textit{Some connections
between an operator and its Helton class,} J. Math. Anal. Appl. 340 (2) (2008) 1235--1240.

\bibitem {KK}I. Kim, Y. Kim, E. Ko, J. E. Lee, \textit{Inherited properties
through the Helton class of an operator}, Bull. Korean Math. Soc. 48 (1)
(2011) 183--195.


\bibitem {R}S. Richter, \textit{A representation theorem for cyclic analytic
two-isometries,} Trans. Amer. Math. Soc. 328 (1991) 325--349.


\bibitem {Ahmed}O. A. M. Sid Ahmed, \textit{Some properties of $m$-isometries and $m$-invertible operators on Banach
spaces}, Acta Math. Sci. Ser. B Engl. Ed. 32 (2012), 520--530.

\bibitem {ST1}M. Stankus, \textit{$m$-Isometries, $n$-symmetries and
other linear transformations which are hereditary roots,} Integral Equations
Operator Theory 75 (2013) 301--321.


\end{thebibliography}
\end{document}